\documentclass[12pt]{article}

\usepackage{amsmath,amsthm,amssymb}
\usepackage{tikz}
\usepackage[colorlinks,
            linkcolor=red,
            anchorcolor=blue,
            citecolor=blue
            ]{hyperref}
\usepackage{pdfsync}
\usepackage[numbers,sort&compress]{natbib}
\allowdisplaybreaks

\theoremstyle{definition}

\theoremstyle{plain}
\newtheorem{theorem}{Theorem}[section]

\newtheorem{remark}{Remark}[section]
\newtheorem{lemma}{Lemma}[section]

\newtheorem{proposition}{Proposition}[section]

\newtheorem{corollary}{Corollary}[section]

\numberwithin{equation}{section}

\begin{document}
	\title{\bf Monotone quantities on $3$-manifolds with nonnegative scalar curvature }

 \author{Jiannan Chen$^{1}$\footnote{Email address: chenjiannan@email.ncu.edu.cn}, Haiping Fu$^{1}$\footnote{Email address: mathfu@126.com\,\,\    
 	}\\
 	{\small 1.Department of Mathematics,  Nanchang University,} {\small Nanchang 330031, Jiangxi, China.} 
    \\
}

\date{}

\maketitle
\begin{abstract}
In this paper, we derive monotone quantities for harmonic functions on asymptotically flat 3-manifolds with simple topology and nonnegative scalar curvature. These monotone quantities are constant
on spatial Schwarzschild manifolds outside rotationally symmetric spheres. To derive monotone quantities, our method is different from the ODE analysis in Xia-Yin-Zhou \cite{Xia} and Mazurowski-Yao \cite{Maz}, we follow the strategy developed in Miao \cite{Miao}. As applications, we recover and generalize some geometric inequalities and  mass-capacity inequalities in Miao \cite{Miao} and Oronzio \cite{Oronzio}. Furthermore, we obtain the integral identities for the mass-capacity ratio which is parallel to the results in Miao \cite{Miao}.   
\end{abstract}	

	

\section{\bf Introduction and main results}
Monotonicity quantities play a significant role in geometric analysis.  
Some classical results are given by Bishop–Gromov volume comparison theorem, Huisken monotonicity formula for the mean curvature flow \cite{Huisken1}, Perelman entropy formula for the Ricci flow \cite{Perelman}, and Geroch monotonicity of the Hawking mass along the inverse mean curvature flow \cite{Huisken2}. In \cite{Cold1,Cold2,Cold3}, Colding-Minicozzi established some monotonicity formulas along the level sets of the minimal positive Green function in nonparabolic Riemannian manifolds with nonnegative Ricci curvature. 

In recent years, there has been an increasing interest in investigating the level sets of harmonic functions over 3-manifolds with nonnegative scalar curvature. A pioneering work of Stern \cite{Stern} discovered intriguing analogy between the use of such level sets and the use of stable minimal surfaces instituted by Schoen and Yau \cite{SY1}. On asymptotically flat 3-manifolds, a new
proof of the positive mass theorem was obtained by Bray-Kazaras-Khuri-Stern\cite{Bray2},
which made use of harmonic functions that asymptotic to a linear coordinate function. Later, Munteanu-Wang \cite{Wang1} established a monotonicity formula for 3-manifolds with nonnegative scalar curvature and provided a sharp comparison results. Furthermore, Chodosh-Li \cite{Otis} generalized Munteanu-Wang's monotonicity formula to three-dimensional manifolds that do not have nonnegative scalar curvature, using this, they resolved the stable Bernstein problem in $ \mathbb{R}^{4} $. In \cite{Ag3}, Agostiniani,Mazzieri and Oronzio obtained another proof of the
Riemannian positive mass theorem through a different monotone quantity along level sets of the Green’s function on asymptotically flat 3-manifolds. In \cite{Miao}, Miao derived three monotone quantities 
\begin{equation}\label{A}
\mathcal{A}(t)=\frac{1}{1-t}\Big(8\pi-\frac{1}{1-t}\int_{\{u=t\}}H|\nabla u|\Big),
\end{equation}
\begin{equation}\label{B}
\mathcal{B}(t)=\frac{1}{1-t}\Big(4\pi-\frac{1}{(1-t)^{2}}\int_{\{u=t\}}|\nabla u|^{2}\Big),
\end{equation}
\begin{equation}\label{C}
\mathcal{D}(t)=4\pi(1-t)+\int_{\{u=t\}}H|\nabla u|-\frac{3}{1-t}\int_{\{u=t\}}|\nabla u|^{2}.
\end{equation}
along the level sets of a harmonic function $ u $ on 3-manifolds with simple topology and non-negative scalar curvature, by which he obtained several new proofs of the Positive Mass Theorem. Moreover, he established the following geometric inequalities
\begin{equation}\label{eq 1}
4\pi+\int_{\Sigma}H|\nabla u|\geq 3\int_{\Sigma}|\nabla u|^{2},
\end{equation}
\begin{equation}\label{eq 2}
4\pi-\int_{\Sigma}|\nabla u|^{2}\leq 4\pi \mathfrak{m}\mathfrak{c}_{\Sigma}^{-1},
\end{equation}
\begin{equation}\label{eq 3}
8\pi-\int_{\Sigma}H|\nabla u|\leq 12\pi\mathfrak{m}\mathfrak{c}_{\Sigma}^{-1},
\end{equation}
where $ u $ is the harmonic function on $ M $, $ \Sigma $ is the boundary of $ M $ and $ H $ is the mean curvature of $ \Sigma $. Moreover, equality in (\ref{eq 1}), (\ref{eq 2}) and (\ref{eq 3}) holds if and only if $ (M,g) $ is isometric to $ \mathbb{R}^{3} $ minus a ball. Inspired by Bray’s work \cite{Bray1}, by using the conformal method which preserves the nonnegativity of scalar curvature and the harmonicity of a function, Miao \cite{Miao} prmoted the geometric inequalities (\ref{eq 1}) and (\ref{eq 2}) to the following ones:
\begin{equation}\label{eq 4}
4\pi+k\int_{\Sigma}H|\nabla u|\geq k(4-k)\int_{\Sigma}|\nabla u|^{2},
\end{equation}
\begin{equation}\label{eq 5}
\frac{\mathfrak{m}}{\mathfrak{c}_{\Sigma}}\geq 2-\frac{1}{k}-\frac{k}{4\pi}\int_{\Sigma}|\nabla u|^{2}.
\end{equation}
Equality in both (\ref{eq 4}) and (\ref{eq 5}) holds if and only if $ (M,g) $ is isometric to a spatial Schwarzschild manifold outside a rotationally symmetric sphere.  Similar monotone quantities and geometric inequalities on capacitary functions have been also found by Oronzio \cite{Oronzio} independently. In \cite{Xia}, Xia-Yin-Zhou established general monotone quantities associated to p-capacitary functions whose model space is a spatial Schwarzschild manifold outside a rotationally symmetric sphere via an ODEs analysis. By using an ODE technique, Mazurowski-Yao \cite{Maz} obtained the similar monotone quantities independently. For more relevant results, refer to \cite{Fog1,Fog2,Mazz,Bray3,Hrisch1,Hrisch2,Mantou,Miao 0,Miao1}.

  In this paper, we derive and recover some monotone quantities for harmonic functions on asymptotically flat 3-manifolds with simple topology and nonnegative scalar curvature. These monotone quantities are constant
  on spatial Schwarzschild manifolds outside rotationally symmetric spheres. To establish monotone quantities, our method is different from the ODE analysis in  Xia-Yin-Zhou \cite{Xia} and Mazurowski-Yao \cite{Maz}, we follow the strategy developed in the work of Miao \cite{Miao}. In fact, in \cite{Miao}, Miao first obtained the monotone quantity (\ref{C}) then derived (\ref{A}) and (\ref{B}) from (\ref{C}). Inspired by this idea, we first use the monotone quantity (\ref{D t}) in \cite{Xia} to derive monotone quantity (\ref{P}) and recover two monotone quantities (\ref{Q}) and (\ref{F}) in \cite{Miao 0} and \cite{Xia}, respectively. Then, we apply the monotone quantity (\ref{F}) to derive the monotone quantity (\ref{G}). As applications, we recover and generalize some geometric inequalities and mass-capacity inequalities in \cite{Miao} and \cite{Oronzio}.  Furthermore, we obtain the integral identities for the mass-capacity ratio which is parallel to the results in \cite{Miao}.    In order to state our main results, we first introduce several preliminaries.

A 3-dimensional Riemannian manifold $ (M,g) $ (with or without boundary) is said to be one-ended asymptotically flat if there exists a compact set $ K\subset M $ such that $ M\setminus K $ is diffeomorphic to $ \mathbb{R}^{3} $ minus a ball, with respect to the standard coordinates on $ \mathbb{R}^{3} $, $ g $ satisfies
$$
g_{ij}=\delta_{ij}+O(|x|^{-\tau}),\,\,\,\partial g_{ij}=O(|x|^{-\tau-1}),\,\,\,\partial\partial g_{ij}=O(|x|^{-\tau-2})
$$
for some constant $ \tau>\frac{1}{2} $. 

The ADM mass of $ (M,g) $, which has been introduced in \cite{ADM}, is defined by 
$$
\mathfrak{m}=\lim\limits_{r\to \infty}\frac{1}{16\pi}\int\limits_{S_{r}}(\partial_{j}g_{ij}-\partial_{i}g_{ij})\frac{x^{i}}{|x|}d\sigma_{e},
$$
where $ S_{r}=\{|x|=r \} $ and $ d\sigma_{e} $ is the volume from induced from the Euclidean metric. The scalar curvature $ R_{g} $ of $ (M,g) $ is assumed to be integrable so that the ADM mass of $ (M,g) $ exists and it is independent of the asymptotically flat coordinate chart, see \cite{Bar}. 

On an asymptotically flat 3-manifold $ (M,g) $ with boundary $ \Sigma $, the capacity of $ \Sigma $ is given by 
$$
\mathfrak{c}_{\Sigma}=\inf\Bigl\{\frac{1}{4\pi}\int_{M}|\nabla f|^{2} \Big\},
$$
where the infimum is taken over all of locally Lischitz functions that equal $ 1 $ at $ \Sigma $ and tend to $ 0 $ at $ \infty $. Equivalently, we have 
$$
\mathfrak{c}_{\Sigma}=\frac{1}{4\pi}\int_{M}|\nabla \phi|^{2}=\frac{1}{4\pi}\int_{\Sigma}|\nabla \phi|, 
$$
where
$$
\Delta \phi=0,\,\,\,\phi |_{\Sigma}=1,\,\,\,\text{and }\phi\to 0\,\,\,\text{at } \infty.
$$

The spatial Schwarzschild manifold of mass $ \mathfrak{m} $ outside a rotationally symmetric sphere is the 3-dimensional Riemannian manifold with boundary given by the couple 
\begin{equation}\label{Sch}
\Big(\mathbb{R}^{3}\setminus B_{r_{0}}(0), (1+\frac{\mathfrak{m}}{2|x|})^{4}g_{eucl} \Big),
\end{equation}
which will be denoted by $ (M_{m,r_{0}},g_{m}) $.

Let $ (M,g) $ be an one-ended asymptotically flat manifold with boundary $ \Sigma $. Let $ u $ be the solution of the Dirichlet problem:
\begin{equation}\label{u}
\begin{cases}
\Delta u=0 &\text{in } M,\\
u=0, &\text{on } \Sigma,\\
u\to 1, &\text{at } \infty.
\end{cases}
\end{equation}
Given any $ t\in [0,1] $, let 
$$ 
\Sigma_{t}=\{x\in M| u(x)=t\} 
$$ 
denote the level set of $ u $.
In order to state the main results, we introduce the auxiliary function
$$
\omega(t)=1+(k-1)t,\,\,\, k>0.
$$
For convenience, we denote $ \omega=\omega(u) $. First we derive the following monotone quantity $ \mathcal{A}_{k}(t) $ for $ 0<k<4 $, which generalize the work in \cite{Miao}. Then we recover the monotone quantity $ \mathcal{B}_{k}(t) $ for $ k>0 $ in \cite{Miao 0} and some geometric inequalities in \cite{Miao}.
\begin{theorem}\label{th 1.1}
	Let $ (M,g) $ be a complete, orientable, one-ended asymptotically flat 3-manifold with compact, connnected boundary $ \Sigma $. Suppose that $ H_{2}(M,\Sigma)=0 $. Let $ u $ be the solution of (\ref{u}). If $ R_{g}\geq 0 $, then
	\\
	$ (i) $ For $ 0<k<4 $, the following quantity 
	\begin{equation}\label{P}
	 \mathcal{A}_{k}(t):=\frac{1}{1-t}\Big[g(t)\pi-\frac{k}{(1-t)(4\omega(t)-k)}\int_{\Sigma_{t}}H|\nabla u|\Big] 
    \end{equation}
	is monotone nondecreasing in $ t $, where $ g(t)=\frac{4\omega(t)-k}{k}-\frac{k}{4\omega(t)-k} $, $ g(1)=\frac{8}{3} $. Furthermore, we have
	\begin{equation}\label{lim A}
	\mathcal{A}_{k}(t)\leq 4\pi (\mathfrak{m}\mathfrak{c}_{\Sigma}^{-1}-\frac{2(k-1)}{k}).
	\end{equation}
	In particular, at $ \Sigma $,
	\begin{equation}\label{lim A0}
	(\frac{4-k}{k}-\frac{k}{4-k})\pi-\frac{k}{4-k}\int_{\Sigma}H|\nabla u|\leq 4\pi (\mathfrak{m}\mathfrak{c}_{\Sigma}^{-1}-\frac{2(k-1)}{k}),
	\end{equation}
	and equality holds if and only if  $ (M,g) $ is isometric to $ (M_{m,r_{0}},g_{m}) $ of mass $ \mathfrak{m}=2r_{0}(k-1) $ with $ r_{0}=\frac{\mathfrak{c}_{\Sigma}}{k} $.
	\\
	$ (ii) $ For $ k>0 $, the following quantity
	\begin{equation}\label{Q}
	\mathcal{B}_{k}(t):=\frac{\omega(t)}{1-t}\Big[4\pi-\frac{k^{2}}{(1-t)^{2}\omega(t)^{2}}\int_{\Sigma_{t}}|\nabla u|^{2} \Big]
	\end{equation}
	is monotone nondecreasing in $ t $. Furthermore, we have
	\begin{equation}\label{lim B}
	\mathcal{B}_{k}(t)\leq 2\pi (\mathfrak{m}\mathfrak{c}_{\Sigma}^{-1}-1).
	\end{equation}
	In particular, at $ \Sigma $,
	\begin{equation}\label{lim B0}
	4\pi-k^{2}\int_{\Sigma}|\nabla u|^{2}\leq 4\pi k (\mathfrak{m}\mathfrak{c}_{\Sigma}^{-1}-\frac{2(k-1)}{k}),
	\end{equation}
	and equality holds if and only if  $ (M,g) $ is isometric to $ (M_{m,r_{0}},g_{m}) $ of mass $ \mathfrak{m}=2r_{0}(k-1) $ with $ r_{0}=\frac{\mathfrak{c}_{\Sigma}}{k} $.
\end{theorem}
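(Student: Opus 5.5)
The plan is to follow Miao's route, in which the new quantities are obtained from a single "master" monotone quantity rather than by a fresh Bochner-type computation. Here the master quantity is the Xia--Yin--Zhou quantity $\mathcal{D}_k(t)$ of (\ref{D t}), which we take as given together with its monotonicity (under $R_g\ge 0$ and $H_2(M,\Sigma)=0$) and its rigidity case. Set
\[
F(t)=\int_{\Sigma_t}|\nabla u|^2,\qquad G(t)=\int_{\Sigma_t}H|\nabla u| .
\]
Since $u$ is harmonic, the coarea formula and the divergence theorem give the first variation identity $F'(t)=\pm G(t)$, with the sign fixed by the orientation of $\nu=\nabla u/|\nabla u|$ and the sign convention for $H$. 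This holds at regular values and extends to all $t$ in the distributional sense, using Stern's/Miao's treatment of critical values under the simple topology assumption. By this identity, both $\mathcal{A}_k$ and $\mathcal{B}_k$ can be written in terms of $F$, $F'$ and, for $\mathcal{A}_k$, $F''$. The point is to recognise $\mathcal{D}_k$, which is a combination $a(t)+b(t)G+c(t)F$ with explicit $\omega$-dependent coefficients, inside these derivatives.

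\textbf{Part (ii).} I would differentiate $\mathcal{B}_k$ directly:
\[
\mathcal{B}_k'(t)=\frac{d}{dt}\Big[\frac{4\pi\omega}{1-t}-\frac{k^2F}{(1-t)^3\omega}\Big].
\]
Using $\omega'=k-1$ and $F'=\pm G$, I expect this to equal a positive factor (of the form $\frac{k}{(1-t)^2\omega^2}$ up to constants) times $\big(\mathcal{D}_k(1)-\mathcal{D}_k(t)\big)$, or times $-\mathcal{D}_k(t)$ if $\mathcal{D}_k$ vanishes at infinity. Monotonicity of $\mathcal{D}_k$ then gives $\mathcal{B}_k'\ge 0$. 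This step fixes the exact normalisation of the weights $\omega$ and $k^2/\omega^2$.

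\textbf{Part (i).} The quantity $\mathcal{A}_k$ contains $G$ itself. My plan is to show, by an algebraic identity, that $\mathcal{A}_k(t)$ is a combination $\alpha(t)\mathcal{D}_k(t)+\beta(t)\mathcal{B}_k(t)+\gamma(t)$ with $\alpha,\beta\ge0$ for $0<k<4$. This mirrors how Miao expressed (\ref{A}) through (\ref{C}) and (\ref{B}). Alternatively, I would differentiate $\mathcal{A}_k$ and use $\mathcal{D}_k'\ge0$ (which bounds $G'$) together with the sign of $\mathcal{D}_k(t)$ relative to its limit (which bounds $G$ in terms of $F$).

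The factor $4\omega-k=4+(4k-4)t-k$ is positive on $[0,1]$ exactly when $0<k<4$. Its appearance in the denominator is why the range of $k$ is restricted. The specific function $g(t)$ should come out of integrating the $\omega$-dependent coefficients; I would determine it by requiring $\mathcal{A}_k$ to be constant on $(M_{m,r_0},g_m)$ with $\mathfrak{m}=2r_0(k-1)$, $r_0=\mathfrak{c}_\Sigma/k$. I expect this bookkeeping to be the main obstacle: finding the exact linear combination (or the correct sign of the derivative) so that all terms not controlled by $\mathcal{D}_k$ cancel. The Schwarzschild computation serves as the guide and the check.

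\textbf{Limits, inequalities and rigidity.}
\begin{itemize}
\item[(a)] \emph{Asymptotics.} Use $1-u=\mathfrak{c}_\Sigma|x|^{-1}-\tfrac12\mathfrak{m}\mathfrak{c}_\Sigma|x|^{-2}+o(|x|^{-2})$ and $H=\frac{2}{r}-\frac{2\mathfrak{m}}{r^2}+\dots$ on large level sets. Expanding with $1-t\approx \mathfrak{c}_\Sigma/r$ gives the limits of $\mathcal{A}_k$ and $\mathcal{B}_k$ as $t\to1$.
\item[(b)] \emph{Inequalities.} Monotonicity then yields (\ref{lim A}) and (\ref{lim B}). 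Evaluating at $t=0$, where $\omega=1$ and $g(0)=\frac{4-k}{k}-\frac{k}{4-k}$, gives (\ref{lim A0}) and (\ref{lim B0}).
\item[(c)] \emph{Rigidity.} Equality at $\Sigma$ forces $\mathcal{A}_k$ (respectively $\mathcal{B}_k$) to be constant. Through the identity above, this forces $\mathcal{D}_k$ to be constant. The rigidity statement for $\mathcal{D}_k$ from \cite{Xia} then identifies $(M,g)$ with $(M_{m,r_0},g_m)$, and the relations $\mathfrak{m}=2r_0(k-1)$, $r_0=\mathfrak{c}_\Sigma/k$ follow by matching capacities.
\end{itemize}
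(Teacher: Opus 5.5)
\textbf{Gap: the limits as $t\to1$.} Your step (a) obtains $\lim_{t\to1}\mathcal{A}_k$ and $\lim_{t\to1}\mathcal{B}_k$ from the expansions $1-u=\mathfrak{c}_\Sigma|x|^{-1}-\tfrac12\mathfrak{m}\mathfrak{c}_\Sigma|x|^{-2}+o(|x|^{-2})$ and $H=\tfrac2r-\tfrac{2\mathfrak{m}}{r^2}+\dots$. These encode an asymptotically Schwarzschild structure at order $|x|^{-1}$, which the theorem does not assume.

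\begin{itemize}
\item With $g_{ij}-\delta_{ij}=O(|x|^{-\tau})$ and $\tau>\frac12$, you only have $u=1-\mathfrak{c}_\Sigma|x|^{-1}+O_2(|x|^{-1-\tau})$ and $H=2|x|^{-1}+O(|x|^{-1-\tau})$ on $\Sigma_t$.
\item For $\tau<1$ these remainders dominate the $|x|^{-2}$ terms that would carry $\mathfrak{m}$.
\item The limits live exactly at that order. Lemma~\ref{funda u}(iv) only gives $\mathcal{B}_k(t)=-8\pi(k-1)+O((1-t)^{\tau-1})$, which does not determine the limit and need not even be bounded.
\end{itemize}

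The mass is visible only through an integrated quantity, and your proposal lacks such an ingredient. The paper supplies it in three steps:
\begin{itemize}
\item It uses the Hawking mass asymptotics of Lemma~\ref{mass estimate}, $\lim_{t\to1}\frac{1}{1-t}\big(1-\frac{1}{16\pi}\int_{\Sigma_t}H^2\big)\le 2\mathfrak{m}\mathfrak{c}_\Sigma^{-1}$.
\item It inserts this into $G'\le 4\pi-\frac34\int_{\Sigma_t}H^2=-8\pi+12\pi\big(1-\frac{1}{16\pi}\int_{\Sigma_t}H^2\big)$, integrates from $t$ to $1$ using $G\to0$, and then integrates $F'=-G$ once more.
\item This yields the one-sided bounds $G\ge 8\pi(1-t)-12\pi\mathfrak{m}\mathfrak{c}_\Sigma^{-1}(1-t)^2+o((1-t)^2)$ and $F\ge 4\pi(1-t)^2-4\pi\mathfrak{m}\mathfrak{c}_\Sigma^{-1}(1-t)^3+o((1-t)^3)$.
\end{itemize}
Since $G$ and $F$ enter $\mathcal{A}_k$ and $\mathcal{B}_k$ with negative coefficients, L'H\^opital gives upper bounds for the limits rather than their values. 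That is all (\ref{lim A})--(\ref{lim B0}) require. Carrying this out gives $\lim_{t\to1}\mathcal{B}_k\le 4\pi k\big(\mathfrak{m}\mathfrak{c}_\Sigma^{-1}-\frac{2(k-1)}{k}\big)$, consistent with (\ref{lim B0}). The right-hand side printed in (\ref{lim B}) is a misprint.

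\textbf{Monotonicity: right strategy, wrong signs.} Your use of $\mathcal{D}_k$ as the master quantity is essentially the paper's argument, but the signs you anticipate are wrong. By Lemma~\ref{D mono}, $\mathcal{D}_k$ is non\emph{increasing}, and it tends to $0$ as $t\to1$, so $\mathcal{D}_k\ge0$. The exact identity for (ii) is $\mathcal{B}_k'=\frac{k^2\omega}{(1-t)^3}\mathcal{D}_k(t)$, a positive factor times $+\mathcal{D}_k$. Your proposed decomposition for (i) does hold, with $\gamma=0$, $\beta=\frac1k$ and $\alpha=-\frac{k\omega^2}{(1-t)^2(4\omega-k)}<0$:
\[
\mathcal{A}_k=\frac1k\mathcal{B}_k-\frac{k\omega^2}{(1-t)^2(4\omega-k)}\mathcal{D}_k,\qquad
\mathcal{A}_k'=-\frac{k\omega^2}{(1-t)^2(4\omega-k)}\mathcal{D}_k'+\frac{3k\omega(2\omega-k)^2}{(1-t)^3(4\omega-k)^2}\mathcal{D}_k\ge0 .
\]
In each part your two slips cancel: you take the wrong direction of monotonicity for $\mathcal{D}_k$ and the wrong sign for its coefficient. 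So your conclusions survive.

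This identity is a clean repackaging of the paper's direct differentiation of $\mathcal{A}_k$. That computation uses the same two inputs, $G'\le4\pi-\frac34\int H^2$ and (\ref{t cp}). The identity also makes your rigidity step precise. Constancy of $\mathcal{A}_k$ or $\mathcal{B}_k$ forces $\mathcal{D}_k\equiv0$, which is equality in (\ref{0 cp}), and Proposition~\ref{pro} then applies.
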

\begin{remark}
In \cite{Miao}, Miao derived the monotone quantity $ \mathcal{A}_{k}(t) $ and $ \mathcal{B}_{k}(t) $ for $ k=1 $. In \cite{Miao 0}, Miao derived the monotone quantity $ \mathcal{B}_{k}(t) $ for $ k>0 $.
\end{remark}
The following monotone quantitity $ \mathcal{G}_{k}(t) $ was first derived in Munteanu-Wang \cite{Wang1} for harmonic functions with $ k=1 $. Then, $ \mathcal{G}_{k}(t) $ was investigated in \cite{Chan,Hrisch1} for p-harmonic functions with $ p\in (1,3) $ and $ k=1 $. In \cite{Oronzio}, Oronzio considered $ \mathcal{G}_{k}(t) $ for harmonic functions with $ k=2 $. Here, we establish the monotonicity of $ \mathcal{G}_{k}(t) $ for harmonic functions with $ 0<k\leq2 $, which generalize the work in \cite{Wang1} and \cite{Oronzio}.
\begin{theorem}\label{th 1.2}
Let $ (M,g) $ be a complete, orientable, one-ended asymptotically flat 3-manifold with compact, connnected boundary $ \Sigma $ and with nonnegative scalar curvature. Suppose that $ H_{2}(M,\Sigma)=0 $. Let $ u $ be the solution of (\ref{u}) and let $ 0<k\leq 2 $. If there exists 
\begin{equation}\label{a condi}
\alpha\in (-\frac{1}{k\mathfrak{c}_{\Sigma}},\frac{1}{k\mathfrak{c}_{\Sigma}}] \end{equation}
such that 
\begin{equation}\label{H condition0}
H\leq \alpha(1-k^{2}\mathfrak{c}_{\Sigma}|\nabla u|)\,\,\,\text{on }\Sigma,
\end{equation}
then the following quantity
\begin{equation}\label{G}
\mathcal{G}_{k}(t)=-\frac{4\pi(1-t)}{k^{2}\omega(t)}+\frac{1}{(1-t)\omega(t)^{3}}\int_{\Sigma_{t}}|\nabla u|^{2}
\end{equation}
is monotone nondecreasing in $ t $. Moreover, we have
\begin{equation}\label{boundary}
\int_{\Sigma}|\nabla u|^{2}\leq \frac{4\pi}{k^{2}},
\end{equation}
and equality holds if and only if  $ (M,g) $ is isometric to $ (M_{m,r_{0}},g_{m}) $ of mass $ \mathfrak{m}=2r_{0}(k-1) $ with $ r_{0}=\frac{\mathfrak{c}_{\Sigma}}{k} $.
\end{theorem}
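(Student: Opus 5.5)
The plan is to run the same scheme used for Theorem \ref{th 1.1}: differentiate $\mathcal{G}_{k}(t)$ along the level sets, and control the resulting first-order expression with the monotone quantity (\ref{F}) (the one recovered from \cite{Xia}). The boundary hypothesis (\ref{H condition0}) will be used only to fix the sign of that quantity at $t=0$.

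\emph{Step 1 (derivative of $\mathcal{G}_{k}$).} Write $I(t)=\int_{\Sigma_t}|\nabla u|^2$ and $J(t)=\int_{\Sigma_t}H|\nabla u|$. Two standard facts hold for a.e.\ $t$, and on the whole interval after the usual regularisation near critical values (as in \cite{Miao}):
\[
\int_{\Sigma_t}|\nabla u|=4\pi\mathfrak{c}_\Sigma, \qquad I'(t)=-J(t).
\]
Using $\omega'=k-1$, this gives
\[
\mathcal{G}_k'(t)=\frac{4\pi}{k^2\omega^2}\big(\omega+(k-1)(1-t)\big)-\frac{J}{(1-t)\omega^3}+I\Big(\frac{1}{(1-t)^2\omega^3}-\frac{3(k-1)}{(1-t)\omega^4}\Big).
\]
Note that $\omega+(k-1)(1-t)=k$. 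Hence $\mathcal{G}_k'\ge 0$ is equivalent to a linear upper bound on $J(t)$ of the form
\[
J(t)\le \frac{4\pi(1-t)\omega}{k}+\frac{\omega-3(k-1)(1-t)}{(1-t)\omega}\,I(t).
\]

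\emph{Step 2 (the key inequality from (\ref{F})).} I expect (\ref{F}) to be, up to normalisation, a linear combination of exactly $4\pi\times(\text{weight})$, $J$ and $I$ with $\omega$-weights that are constant on $(M_{m,r_0},g_m)$. The plan is to show that the inequality of Step 1 is precisely the statement that this quantity has a definite sign, i.e.\ $\mathcal{F}_k(t)\ge 0$. Since (\ref{F}) is monotone in $t$ (its derivative is controlled by the Stern-type Bochner identity, $R_g\ge0$ and $H_2(M,\Sigma)=0$), it suffices to check the sign at $t=0$. There, (\ref{H condition0}) together with $\int_\Sigma|\nabla u|=4\pi\mathfrak{c}_\Sigma$ gives
\[
J(0)\le\alpha\big(4\pi\mathfrak{c}_\Sigma-k^2\mathfrak{c}_\Sigma I(0)\big).
\]
This has to be combined with the sign/size of $4\pi-k^2 I(0)$. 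The range (\ref{a condi}), i.e.\ $|\alpha k\mathfrak{c}_\Sigma|\le1$ with strict inequality at the lower end, is what should make the $t=0$ value of $\mathcal{F}_k$ have the right sign. Possibly one also needs (\ref{lim B0}) from Theorem \ref{th 1.1}(ii) to handle the case $4\pi-k^2I(0)<0$. The restriction $0<k\le2$ presumably enters through the weight $\omega-3(k-1)(1-t)$ and the monotonicity direction of (\ref{F}).

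\emph{Step 3 (asymptotics and conclusion).} Once $\mathcal{G}_k$ is nondecreasing, compute $\lim_{t\to1}\mathcal{G}_k(t)$ using the expansion $1-u=\mathfrak{c}_\Sigma/|x|+O(|x|^{-1-\tau})$. This gives $I(t)=4\pi(1-t)^2+o((1-t)^2)$, so both terms tend to $0$ because $\omega(1)=k>0$. Therefore
\[
\mathcal{G}_k(0)=-\frac{4\pi}{k^2}+\int_\Sigma|\nabla u|^2\le 0,
\]
which is (\ref{boundary}). In the equality case $\mathcal{G}_k$ is constant, so all inequalities used in Step 2 are equalities. The Bochner term then forces $\nabla^2u$ to be umbilic-type along the level sets, $R_g=0$ and the level sets to be round spheres. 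Arguing as in \cite{Miao,Oronzio}, this identifies $(M,g)$ with $(M_{m,r_0},g_m)$; matching $\mathfrak{c}_\Sigma=kr_0$ gives $\mathfrak{m}=2r_0(k-1)$.

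The main obstacle is Step 2: finding the exact normalisation of (\ref{F}) so that $\mathcal{G}_k'\ge0$ reduces to the sign of $\mathcal{F}_k$, and verifying that sign at $t=0$ under (\ref{H condition0}) and (\ref{a condi}). I would first try to write $\mathcal{G}_k'(t)=c(t)\,\mathcal{F}_k(t)$ with $c(t)>0$ and then evaluate $\mathcal{F}_k(0)$ directly.
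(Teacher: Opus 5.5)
Your Step 1 is correct, and the normalisation you are looking for is exact: $\mathcal{G}_k'(t)=\frac{1-t}{\omega(t)^3}\mathcal{F}_k(t)$ at regular values. This is also the paper's starting point. The gap is in Step 2, at the point you flag as the main obstacle. Integrate (\ref{H condition0}) against $|\nabla u|$ and use $4-3k\ge -k$. (This is where $k\le 2$ enters; it does not enter through the monotonicity of $\mathcal{F}_k$, which holds for every $k>0$.) What the hypothesis then gives you at $t=0$ is
\[
\mathcal{F}_k(0)\ \ge\ \frac{1-k\alpha\mathfrak{c}_\Sigma}{k}\Big(4\pi-k^2\int_\Sigma|\nabla u|^2\Big)=-k(1-k\alpha\mathfrak{c}_\Sigma)\,\mathcal{G}_k(0).
\]
So the sign of $\mathcal{F}_k(0)$ is governed precisely by inequality (\ref{boundary}). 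But you intend to deduce (\ref{boundary}) in Step 3 from the monotonicity of $\mathcal{G}_k$, so as written the argument is circular. Your proposed fallback does not break the circle. Inequality (\ref{lim B0}) bounds $4\pi-k^2\int_\Sigma|\nabla u|^2$ from \emph{above} by $4\pi k(\mathfrak{m}\mathfrak{c}_\Sigma^{-1}-\frac{2(k-1)}{k})$. It gives nothing that excludes $4\pi-k^2\int_\Sigma|\nabla u|^2<0$.

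The missing idea in the paper is to use the monotonicity of $\mathcal{F}_k$ before knowing the sign of $\mathcal{F}_k(0)$. The bound $\mathcal{G}_k'(t)\ge\frac{1-t}{\omega(t)^3}\mathcal{F}_k(0)$ holds regardless of that sign. Integrating over $[0,1)$, using $\int_0^1\frac{1-t}{\omega(t)^3}\,dt=\frac{1}{2k}$ and $\mathcal{G}_k(t)\to0$ as $t\to1$, gives $-\mathcal{G}_k(0)\ge\frac{1}{2k}\mathcal{F}_k(0)$. Combined with the display above, this yields $-\frac12(1+k\alpha\mathfrak{c}_\Sigma)\mathcal{G}_k(0)\ge0$. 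The strict lower bound on $\alpha$ then gives $\mathcal{G}_k(0)\le0$. The upper bound $\alpha\le\frac{1}{k\mathfrak{c}_\Sigma}$ then gives $\mathcal{F}_k(0)\ge0$, and only at this point does $\mathcal{G}_k'\ge0$ follow.

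Alternatively, you could obtain (\ref{boundary}) independently, as in the proof of Theorem~\ref{th 1.3}. Combine (\ref{0 cp}) with the integrated boundary condition and $4-k\ge k$ to get $k^2(1+k\alpha\mathfrak{c}_\Sigma)\int_\Sigma|\nabla u|^2\le4\pi(1+k\alpha\mathfrak{c}_\Sigma)$. Either input closes Step 2; without one of them the proposal does not establish the monotonicity. The rest of your plan is fine, including the limit $\mathcal{G}_k(t)\to0$ and the rigidity via $\mathcal{G}_k\equiv0\Rightarrow\mathcal{F}_k\equiv0$.
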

Applying Theorem~\ref{th 1.1}, we obtain (\ref{boundary}) under a weaker condition, and then mass-capacity inequality and area-capacity inequality, which extend the work in \cite{Oronzio}.
\begin{theorem}\label{th 1.3}
	Let $ (M,g) $ be a complete, orientable, one-ended asymptotically flat 3-manifold with $ R_{g}\geq 0 $ and with compact, connnected boundary $ \Sigma $. Suppose that $ H_{2}(M,\Sigma)=0 $. Let $ u $ be the solution of (\ref{u}) and let $ 0<k\leq 2 $. If there exists 
\begin{equation}
\alpha\in (-\frac{1}{k\mathfrak{c}_{\Sigma}},+\infty) 
\end{equation}
such that
\begin{equation}\label{condi H0}
H\leq \alpha(1-k^{2}\mathfrak{c}_{\Sigma}|\nabla u|)\,\,\,\,\text{on } \Sigma,
\end{equation}
	then we have the following inequalities
	$$
	\mathfrak{m}\geq \frac{2(k-1)\mathfrak{c}_{\Sigma}}{k},\,\,\,\,\sqrt{\frac{|\Sigma|}{4\pi k^{2}}}\geq \mathfrak{c}_{\Sigma},\,\,\,\,\int_{\Sigma}|\nabla u|^{2}\leq \frac{4\pi}{k^{2}},
	$$
and equality in each of the above inequalities holds if and only if $ (M,g) $ is isometric to $ (M_{m,r_{0}},g_{m}) $ of mass $ \mathfrak{m}=2r_{0}(k-1) $ with $ r_{0}=\frac{\mathfrak{c}_{\Sigma}}{k} $.
\end{theorem}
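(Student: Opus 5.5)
The plan is to reduce everything to an upper bound for $X:=\int_{\Sigma}|\nabla u|^{2}$, then read off the mass and area inequalities from Theorem~\ref{th 1.1}(ii) and Cauchy--Schwarz. The basic identity is $u=1-\phi$, where $\phi$ is the capacitary potential, so that
\[
\int_{\Sigma}|\nabla u|=4\pi\mathfrak{c}_{\Sigma}.
\]
Multiplying the hypothesis (\ref{condi H0}) by $|\nabla u|\geq 0$ and integrating over $\Sigma$ gives
\[
\int_{\Sigma}H|\nabla u|\leq \alpha\big(4\pi\mathfrak{c}_{\Sigma}-k^{2}\mathfrak{c}_{\Sigma}X\big).
\]

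\textbf{Step 1: the bound on $X$.} Insert this into the generalized Miao inequality (\ref{eq 4}), which I will use for $0<k<4$ and in particular for $0<k\leq 2$. This yields
\[
k(4-k)X\leq 4\pi+k\alpha\big(4\pi\mathfrak{c}_{\Sigma}-k^{2}\mathfrak{c}_{\Sigma}X\big),
\qquad\text{i.e.}\qquad
X\big(k(4-k)+\alpha k^{3}\mathfrak{c}_{\Sigma}\big)\leq 4\pi(1+k\alpha\mathfrak{c}_{\Sigma}).
\]
Since $\alpha k\mathfrak{c}_{\Sigma}>-1$, the coefficient on the left satisfies $k(4-k)+\alpha k^{3}\mathfrak{c}_{\Sigma}>k(4-k)-k^{2}=2k(2-k)\geq 0$. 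This is strictly positive even when $k=2$, because the lower bound on $\alpha$ is strict. Dividing by it, the desired bound $X\leq 4\pi/k^{2}$ follows once
\[
k^{2}(1+k\alpha\mathfrak{c}_{\Sigma})\leq k(4-k)+\alpha k^{3}\mathfrak{c}_{\Sigma}.
\]
This reduces to $k^{2}\leq 4k-k^{2}$, i.e.\ $k\leq 2$, which is where that hypothesis enters. Note that no upper bound on $\alpha$ is needed here; that is the improvement over Theorem~\ref{th 1.2}.

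\textbf{Step 2: mass and area.} Feeding $k^{2}X\leq 4\pi$ into (\ref{lim B0}) gives
\[
0\leq 4\pi-k^{2}X\leq 4\pi k\Big(\mathfrak{m}\mathfrak{c}_{\Sigma}^{-1}-\tfrac{2(k-1)}{k}\Big),
\]
hence $\mathfrak{m}\geq 2(k-1)\mathfrak{c}_{\Sigma}/k$. For the area, Cauchy--Schwarz gives $(4\pi\mathfrak{c}_{\Sigma})^{2}=\big(\int_{\Sigma}|\nabla u|\big)^{2}\leq |\Sigma|X\leq 4\pi|\Sigma|/k^{2}$, which is exactly $\mathfrak{c}_{\Sigma}\leq\sqrt{|\Sigma|/(4\pi k^{2})}$.

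\textbf{Step 3: rigidity.} This is the step I expect to need the most care.
\begin{itemize}
\item If $X=4\pi/k^{2}$, then every inequality in Step 1 is an equality. Because the coefficient was shown strictly positive, this forces equality in (\ref{eq 4}), and its equality case gives a Schwarzschild exterior. One then checks that the parameters must be $r_{0}=\mathfrak{c}_{\Sigma}/k$ and $\mathfrak{m}=2r_{0}(k-1)$, since in $(M_{m,r_{0}},g_{m})$ one has $\int_{\Sigma}|\nabla u|^{2}=4\pi/k^{2}$ exactly for that relation.
\item If equality holds in the mass inequality, then (\ref{lim B0}) gives $4\pi-k^{2}X\leq 0$. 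Combined with Step 1 this forces $X=4\pi/k^{2}$ and equality in (\ref{lim B0}), whose rigidity statement applies.
\item If equality holds in the area inequality, then $X=4\pi/k^{2}$ again, and the previous case applies.
\end{itemize}
The converse, that the model $(M_{m,r_{0}},g_{m})$ attains all three equalities, is a direct computation with the explicit $u$ on that model.

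The points to verify carefully are that (\ref{eq 4}) and its equality characterization are indeed available in the full range $0<k\leq 2$, and the algebra identifying the Schwarzschild parameters in the equality case.
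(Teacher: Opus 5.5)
Your proposal is correct and follows the paper's proof essentially line by line. You integrate the mean-curvature hypothesis against $|\nabla u|$ and insert it into (\ref{0 cp}) to get $\int_{\Sigma}|\nabla u|^{2}\leq 4\pi/k^{2}$. You divide by the positive coefficient $k(4-k)+\alpha k^{3}\mathfrak{c}_{\Sigma}$, whereas the paper first lowers it to $k^{2}(1+k\alpha\mathfrak{c}_{\Sigma})$ using $k\leq 4-k$; these amount to the same thing. You then read off the mass bound from (\ref{lim B0}) and the area bound from Cauchy--Schwarz, with rigidity reduced to Proposition~\ref{pro}.

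Your equality-case analysis is more explicit than the paper's one-line appeal to Proposition~\ref{pro}. It also shows that for $k<2$ the bound on $\int_{\Sigma}|\nabla u|^{2}$ is strict, so equality can only occur when $k=2$.
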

\begin{remark}
	In \cite{Oronzio}, $ k=2 $, $\alpha\in (-\frac{1}{2\mathfrak{c}_{\Sigma}},\frac{1}{2\mathfrak{c}_{\Sigma}}]$ in the mean curvature condition (\ref{condi H0}).
\end{remark}
In \cite{Bray2}, Bray–Kazaras–Khuri–Stern found an integral identity for the mass of anasymptotically flat manifold via using the harmonic function that asymptotic to a linear coordinate function. In \cite{Miao}, Miao derived mass identities with a harmonic
function $ u $ that equals $ 0 $ at the boundary and is asymptotic to $ 1 $ at $ \infty $, and he introduced a symmetric $ (0,2) $ tensor
$$
\Phi_{u}=\frac{|\nabla u|^{2}}{1-u}g-\frac{3du\otimes du}{1-u}.
$$
to explore the mass identities.
Motivated by the above results, we establish the following mass identites via a harmonic
function $ u $ that equals $ 0 $ at the boundary and is asymptotic to $ 1 $ at $ \infty $, and we introduce the following symmetric tensor
\begin{equation}\label{Psi}
\Psi_{u}=\frac{(2\omega-k)|\nabla u|^{2}}{(1-u)\omega}g-\frac{3(2\omega-k)}{(1-u)\omega}du\otimes du.
\end{equation}
to investigate the mass identities. We have the following result.
\begin{theorem}\label{th 1.6}
	Let $ (M,g) $ be a complete, orientable, one-ended asymptotically flat 3-manifold with compact, connnected boundary $ \Sigma $. Suppose that $ H_{2}(M,\Sigma)=0 $. Let $ u $ be the solution of (\ref{u}) and $ \Psi_{u} $ be defined as in (\ref{Psi}). Then
	\begin{equation}\label{int 1}
	\begin{aligned}
	&4\pi \Big(\mathfrak{m}\mathfrak{c}_{\Sigma}^{-1}-\frac{2(k-1)}{k}\Big)-\Big(\frac{4\pi}{k}-k\int_{\Sigma}|\nabla u|^{2}\Big)\\
	&\geq \frac{k}{4}\int_{M}\frac{1}{\omega^{2}}\Big[\frac{2\omega-k}{(1-u)^{2}}+k-2\Big]\Big[\frac{|\nabla^{2}u-\Psi_{u}|^{2}}{|\nabla u|}+R_{g}|\nabla u|\Big],
	\end{aligned}
	\end{equation}
	and
	\begin{equation}\label{int 2}
	\begin{aligned}
	&8\pi\Big(\mathfrak{m}\mathfrak{c}_{\Sigma}^{-1}-\frac{2(k-1)}{k}\Big)-\Big(\frac{4\pi}{k}+(4-3k)\int_{\Sigma}|\nabla u|^{2}-\int_{\Sigma}|\nabla u|H\Big)\\
	&\geq \int_{M}\frac{1}{2(1-u)^{2}}\Big[\frac{|\nabla^{2}u-\Psi_{u}|^{2}}{|\nabla u|}+R_{g}|\nabla u|\Big].
	\end{aligned}
	\end{equation}
\end{theorem}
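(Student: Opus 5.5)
We prove Theorem~\ref{th 1.6} by integrating the differential inequalities behind the monotonicity of $\mathcal{B}_k$ and $\mathcal{A}_k$ in Theorem~\ref{th 1.1}, but keeping every discarded nonnegative term as an exact bulk integral. Throughout, $\Sigma_t$ is a level set, $\nu=\nabla u/|\nabla u|$, and $H$ is the mean curvature of $\Sigma_t$ with respect to $\nu$.

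\textbf{Step 1: the basic identity for $F(t)=\int_{\Sigma_t}|\nabla u|^2$.} At regular values,
\[
F'(t)=\int_{\Sigma_t}\nabla_{\nu}|\nabla u|=-\int_{\Sigma_t}H|\nabla u|,
\]
using $\Delta u=0$, which gives $\nabla^2u(\nu,\nu)=-H|\nabla u|$. To get a second-order identity, combine the Gauss equation $2K=R_g-2\mathrm{Ric}(\nu,\nu)+H^2-|A|^2$ with Bochner's formula. This is Stern's identity:
\[
\Delta|\nabla u|=\frac{1}{2|\nabla u|}\bigl(|\nabla^2u|^2+|\nabla u|^2(R_g-2K)\bigr)+\frac{|\nabla|\nabla u||^2}{2|\nabla u|}.
\]
Gauss--Bonnet gives $\int_{\Sigma_t}K\le 4\pi$ when $\Sigma_t$ is connected, which is where $H_2(M,\Sigma)=0$ enters. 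The coarea formula turns the Hessian terms into $\int_{\{a<u<b\}}|\nabla^2 u|^2/|\nabla u|$.

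\textbf{Step 2: the matrix identity.} Expand
\[
|\nabla^2u-\Psi_u|^2=|\nabla^2u|^2-2\langle\nabla^2u,\Psi_u\rangle+|\Psi_u|^2 .
\]
Because $\Psi_u=\lambda\,(|\nabla u|^2g-3\,du\otimes du)$ with $\lambda=\frac{2\omega-k}{(1-u)\omega}$, we have $\operatorname{tr}\Psi_u=0$, and $\langle\nabla^2u,\Psi_u\rangle=-3\lambda\,\nabla^2u(\nabla u,\nabla u)=-3\lambda|\nabla u|^3\,\nabla_\nu|\nabla u|$. So the cross term is a divergence-type quantity, $\nabla^2u(\nabla u,\nabla u)=\frac12\langle\nabla u,\nabla|\nabla u|^2\rangle$. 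The term $|\Psi_u|^2=6\lambda^2|\nabla u|^4$ is a pure function of $u$ times $|\nabla u|^4$, hence integrates over level sets to $\int_{\Sigma_t}|\nabla u|^3$.

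The choice of $\lambda$ should be exactly the one that makes the weighted combination reproduce the $t$-derivative of $\mathcal{B}_k$ in (i) and of $\mathcal{A}_k$ in (ii). It should also be tuned so that on Schwarzschild with $\mathfrak m=2r_0(k-1)$ one has $\nabla^2u=\Psi_u$; this is a consistency check I would do first by explicit computation of $u$ there.

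\textbf{Step 3: integrate with weights.} For (\ref{int 1}), multiply the pointwise Bochner/Stern identity by the weight
\[
\frac{k}{4\omega^2}\Big[\frac{2\omega-k}{(1-u)^2}+k-2\Big]
\]
and integrate over $\{0<u<T\}$. Then convert the boundary terms at $\Sigma_T$ into $\mathcal{B}_k(T)$ and at $\Sigma$ into $\mathcal{B}_k(0)=4\pi-k^2\int_\Sigma|\nabla u|^2$. Gauss--Bonnet supplies the inequality sign. Letting $T\to1$ and using the limit $\lim_{t\to1}\mathcal{B}_k(t)$ from (\ref{lim B}) yields the mass term. Rewritten with $\mathcal{B}_k(0)/k$, this gives (\ref{int 1}).

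For (\ref{int 2}), do the same with weight $\frac{1}{2(1-u)^2}$ and the combination of $\int H|\nabla u|$ and $\int|\nabla u|^2$ appearing in $\mathcal{A}_k$-type quantities. Use the asymptotics $\int_{\Sigma_t}H|\nabla u|\to$ the known limit and $\int_{\Sigma_t}|\nabla u|^2\sim 4\pi\mathfrak c_\Sigma^{-1}\cdots$ as $t\to1$. These limits, expressed via $\mathfrak m$ and $\mathfrak c_\Sigma$, are the ones already used in proving (\ref{lim A}) and (\ref{lim B}).

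\textbf{Main obstacle.} The delicate parts are:
\begin{itemize}
\item handling critical values of $u$, by working with $\sqrt{|\nabla u|^2+\epsilon}$ and Sard/Stern's regularization, so that the Bochner identity can be integrated and $|\nabla^2u|^2/|\nabla u|$ is integrable;
\item the asymptotic analysis as $t\to1$, where the weights blow up like $(1-t)^{-2}$, so precise expansions of $u$ at infinity in terms of $\mathfrak m$ and $\mathfrak c_\Sigma$ are needed.
\end{itemize}
I would first verify the algebra on Schwarzschild, where both sides of (\ref{int 1}) and (\ref{int 2}) must vanish. I would then import the limit computations from the proof of Theorem~\ref{th 1.1}.
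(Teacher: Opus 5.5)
\paragraph{The gap is in (\ref{int 1}).} You plan to integrate Stern's identity once against the weight in (\ref{int 1}) over $\{0<u<T\}$ and then read off $\mathcal{B}_k(T)$ at the outer boundary. That boundary term is not $\mathcal{B}_k(T)$.

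In the paper, $\mathcal{B}_k$ is purely first order:
\[
\mathcal{B}_k'(t)=k^2\frac{\omega(t)}{(1-t)^3}\mathcal{D}_k(t),
\]
which uses only $\big(\int_{\Sigma_t}|\nabla u|^2\big)'=-\int_{\Sigma_t}H|\nabla u|$. The Bochner/Stern identity enters only through $\mathcal{D}_k$, via $\mathcal{D}_k(t)\ge\int_t^1\psi_k(s)\,ds$ from (\ref{Psi geq0}), where $\psi_k(t)=\frac{1}{2\omega^2}\int_{\Sigma_t}\big(|\nabla^2u-\Psi_u|^2|\nabla u|^{-2}+R_g\big)$. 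The weight in (\ref{int 1}) comes from swapping the order of this double integration:
\[
W(s):=\int_0^s\frac{\omega(t)}{(1-t)^3}\,dt=\frac{2\omega(s)-k}{2(1-s)^2}+\frac{k-2}{2}.
\]

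If you integrate once against $W$ and carry out all the integrations by parts, the inner boundary is harmless, since $W(0)=0$ kills $\int_\Sigma H|\nabla u|$. But in the normalization where (\ref{int 1}) is multiplied by $k$, the outer boundary contribution is
\[
\mathcal{B}_k(T)-k^2W(T)\mathcal{D}_k(T),\qquad W(T)\sim\frac{k}{2(1-T)^2}.
\]
Lemma~\ref{funda u}(iv) only gives $\mathcal{D}_k(T)=O((1-T)^{1+\tau})$. So the extra term is $O((1-T)^{\tau-1})$, which need not be bounded for $\tau<1$. Letting $T\to1$ and quoting the limit of $\mathcal{B}_k$ therefore does not close the argument.

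The missing ingredient is the sign $\mathcal{D}_k(T)\ge0$ (Proposition~\ref{pro}). It comes from Gauss--Bonnet on the levels $t\in(T,1)$, which lie outside your domain of integration, together with $\mathcal{D}_k(1^-)=0$. So your remark that Gauss--Bonnet supplies the inequality sign accounts for only one of the two inequalities you need. With $\mathcal{D}_k(T)\ge 0$ you can discard the extra term, and your route becomes the paper's proof with Fubini replaced by an integration by parts in $t$.

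\paragraph{(\ref{int 2}) needs a different limit.} Here your single integration is the paper's argument. The weight $\frac{1}{2(1-u)^2}$ gives exactly $\mathcal{F}_k(t_2)-\mathcal{F}_k(t_1)\ge\int_{t_1}^{t_2}\frac{\omega^2}{(1-t)^2}\psi_k\,dt$, and the boundary terms are $\mathcal{F}_k(T)$ and $\mathcal{F}_k(0)$, not an $\mathcal{A}_k$-type quantity.

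The limit you then need is (\ref{lim F}). It does not follow from the leading asymptotics: $\int_{\Sigma_t}H|\nabla u|$ and $\int_{\Sigma_t}|\nabla u|^2$ are $8\pi(1-t)$ and $4\pi(1-t)^2$ up to relative errors $O((1-t)^\tau)$, which do not see $\mathfrak{m}$. Your stated $\int_{\Sigma_t}|\nabla u|^2\sim4\pi\mathfrak{c}_\Sigma^{-1}\cdots$ is also incorrect.

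Nor does it follow from (\ref{lim A}) and (\ref{lim B}). In $\mathcal{F}_k=\frac{4\omega-k}{k}\mathcal{A}_k-\frac{4\omega-3k}{k^2}\mathcal{B}_k$, the coefficient of $\mathcal{B}_k$ tends to $-\frac1k$, so an upper bound on $\lim\mathcal{B}_k$ is of no use. You need the separate L'H\^opital computation of Proposition~\ref{F mono}, fed by the Hawking-mass estimate of Lemma~\ref{mass estimate}.

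A minor slip: $\langle\nabla^2u,\Psi_u\rangle=-3\lambda|\nabla u|^2\nabla_\nu|\nabla u|$, not $-3\lambda|\nabla u|^3\nabla_\nu|\nabla u|$.
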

This paper is organized as follows. In Section 2, we present some preliminary results. In Section 3, we derive monotone quantities and some geometric inequalities along the level sets. In Section 4, we obtain some mass-capacity inequalities. In Section 5, we establish the integral identities for the mass-capacity ratio. 
\section{\bf Preliminaries}
Let $ (M,g) $ be an one-ended asymptotically flat manifold with boundary $ \Sigma $. Let $ u $ be the solution of the Dirichlet problem (\ref{u}). By the maximum principle, $ Int(M)=\{0<u<1\} $. It follows that $ \Sigma=\{u=0\} $ and from the Hopf lemma that $ 0 $ is a regular value of $ u $.

It is known that in a asymptotically flat chart $ (x^{1},x^{2},x^{3}) $ of order $ \tau $, $\tau \in (\frac{1}{2},1)$, one has (see \cite{Mantou} for instance)
$$
u=1-\frac{\mathfrak{c}_{\Sigma}}{|x|}+O_{2}(|x|^{-1-\tau}).
$$
Here $ \mathfrak{c}_{\Sigma}>0 $ is a positive constant equal to the capacity of $ \Sigma $ in $ (M,g) $.

Given any $ t\in [0,1] $, recall that the level set of $ u $ is
$$ 
\Sigma_{t}=\{x\in M| u(x)=t\}. 
$$ 
Now, we collect some results about $ u $ and $ \Sigma_{t} $.  
\begin{lemma}\cite{Miao}\label{funda u}
Let $ u $ be the soulution of (\ref{u}). Suppose that $ t $ is a regular value of $ u $. Then, as $ t\to 1 $, we have the following
\\
$ (i) $ $ |\Sigma_{t}|=4\pi \mathfrak{c}_{\Sigma}^{2}(1-t)^{-2}+O((1-t)^{\tau-2}) $, where $ |\Sigma_{t}| $ is the area of $ \Sigma_{t} $ in $ (M,g) $;
\\
$ (ii) $ $ |\overset{\circ}{\mathbb{II}}|=O(|x|^{-1-\tau}) $, where $ |\overset{\circ}{\mathbb{II}}| $ denotes the traceless part of the second fundamental form $ \mathbb{II} $ of $ \Sigma_{t} $;
\\
$ (iii) $ $ K=|x|^{-2}+O(|x|^{-2-\tau}) $, where $ K $ is the Gauss curvature of $ \Sigma_{t} $;
\\
$ (iv) $ $\frac{1}{(1-t)}\int_{\Sigma_{t}}H|\nabla u|=8\pi+O((1-t)^{\tau})$, $\frac{1}{(1-t)^{2}}\int_{\Sigma_{t}}|\nabla u|^{2}=4\pi+O((1-t)^{\tau})$.
\end{lemma}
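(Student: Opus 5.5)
The plan is to read off every item of Lemma~\ref{funda u} from the asymptotic expansion
$$
u=1-\frac{\mathfrak{c}_{\Sigma}}{|x|}+O_{2}(|x|^{-1-\tau})
$$
in a fixed asymptotically flat chart. For $t$ close to $1$, the level set $\Sigma_{t}$ lies entirely in the asymptotic end, so all quantities can be computed there.

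\textbf{Step 1 (location and shape of $\Sigma_t$).} Differentiating the expansion gives
$$
\partial_i u=\mathfrak{c}_{\Sigma}\frac{x^i}{|x|^3}+O(|x|^{-2-\tau}).
$$
Since $g_{ij}=\delta_{ij}+O(|x|^{-\tau})$, this yields $|\nabla u|=\mathfrak{c}_{\Sigma}|x|^{-2}(1+O(|x|^{-\tau}))$. In particular $\nabla u\neq 0$ for $|x|$ large, and the radial derivative $\partial_r u$ is positive. By the implicit function theorem, $\Sigma_t$ is then a radial graph $|x|=r_t(\theta)$ over the coordinate sphere, with
$$
r_t=\frac{\mathfrak{c}_{\Sigma}}{1-t}\bigl(1+O((1-t)^{\tau})\bigr).
$$
The angular derivatives of $r_t$ come from the angular derivatives of the error term. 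Dividing by $\partial_r u$ gives $|\partial_\theta r_t|=O(r_t^{1-\tau})$.

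\textbf{Step 2 (second fundamental form and curvature: items (ii) and (iii)).} I will use $\mathbb{II}=\nabla^{2}u|_{T\Sigma_t}/|\nabla u|$ with unit normal $\nu=\nabla u/|\nabla u|$. The Hessian is
$$
\nabla^{2}u=\mathfrak{c}_{\Sigma}\,\partial^{2}(-|x|^{-1})+O(|x|^{-3-\tau}).
$$
The Christoffel symbols are $O(|x|^{-1-\tau})$, and multiplied by $|\partial u|=O(|x|^{-2})$ they contribute only to the error. The Euclidean Hessian of $-|x|^{-1}$ restricted to directions tangent to the sphere equals $|x|^{-3}\delta$. Since the normal of $\Sigma_t$ differs from $x/|x|$ by $O(|x|^{-\tau})$, we get
$$
\mathbb{II}=|x|^{-1}g|_{\Sigma_t}+O(|x|^{-1-\tau}).
$$
Hence $\overset{\circ}{\mathbb{II}}=O(|x|^{-1-\tau})$ and $H=2|x|^{-1}+O(|x|^{-1-\tau})$. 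For (iii), the Gauss equation gives $K=\mathrm{Sec}_g(T\Sigma_t)+\det\mathbb{II}$. The ambient curvature is $O(|x|^{-2-\tau})$ because $\partial\partial g=O(|x|^{-2-\tau})$ and $(\partial g)^2$ is even smaller. Also $\det\mathbb{II}=|x|^{-2}+O(|x|^{-2-\tau})$, which gives (iii).

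\textbf{Step 3 (area and integrals: items (i) and (iv)).} For the radial graph, the induced area element is
$$
r_t^2\sqrt{1+|\partial_\theta r_t|^2/r_t^2}\,(1+O(r_t^{-\tau}))\,d\sigma_{S^2},
$$
and by Step 1 this is $r_t^2(1+O(r_t^{-\tau}))\,d\sigma_{S^2}$. Integrating and inserting $r_t=\mathfrak{c}_{\Sigma}(1-t)^{-1}(1+O((1-t)^\tau))$ gives (i). For (iv), Steps 1 and 2 give pointwise on $\Sigma_t$
$$
H|\nabla u|=2\mathfrak{c}_{\Sigma}|x|^{-3}(1+O(|x|^{-\tau})),\qquad |\nabla u|^2=\mathfrak{c}_{\Sigma}^2|x|^{-4}(1+O(|x|^{-\tau})),
$$
where $|x|=\mathfrak{c}_{\Sigma}(1-t)^{-1}(1+O((1-t)^\tau))$. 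Multiplying by the area from (i), the factors of $\mathfrak{c}_{\Sigma}$ cancel:
$$
\int_{\Sigma_t}H|\nabla u|=8\pi(1-t)\bigl(1+O((1-t)^\tau)\bigr),\qquad \int_{\Sigma_t}|\nabla u|^2=4\pi(1-t)^2\bigl(1+O((1-t)^\tau)\bigr).
$$
Dividing by $(1-t)$ and $(1-t)^2$ respectively gives (iv).

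The main obstacle is the bookkeeping in Step 2. One has to check that every perturbation is genuinely of relative order $|x|^{-\tau}$: the metric correction to the Hessian, the tilt of the normal, and the difference between $g$ and the Euclidean metric on the tangent space. This rests on the $O_2$ control of the remainder in the expansion of $u$ (which is where $\tau>\tfrac12$ and the decay of $\partial g$, $\partial\partial g$ enter). Once the pointwise estimates of Step 2 are in place, Steps 1 and 3 are routine integrations.
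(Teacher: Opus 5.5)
Your proposal is correct. The paper gives no proof of this lemma and simply cites Miao, and your argument is the standard direct one behind that citation: you read off (i)--(iv) from $u=1-\mathfrak{c}_{\Sigma}|x|^{-1}+O_{2}(|x|^{-1-\tau})$, realizing $\Sigma_t$ as a radial graph over coordinate spheres and controlling the normal tilt, the Christoffel terms and the metric comparison at relative order $|x|^{-\tau}$.
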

\begin{lemma}\cite{Miao}\label{deriva}
Let $ u $ be the soulution of (\ref{u}). Then, at a regular value $ t $, we have 
\begin{equation}
\begin{aligned}
&\Big(\int_{\Sigma_{t}}|\nabla u|^{2}\Big)^{\prime}=-\int_{\Sigma_{t}}|\nabla u|H,\\
&\Big(\int_{\Sigma_{t}}|\nabla u|H\Big)^{\prime}=\int_{\Sigma_{t}}\Big[-\frac{|\nabla^{\Sigma_{t}}|\nabla u||^{2}}{|\nabla u|^{2}}-\frac{1}{2}|\overset{\circ}{\mathbb{II}}|^{2}-\frac{3}{4}H^{2}+\frac{1}{2}R_{\Sigma_{t}}-\frac{1}{2}R_{g} \Big].
\end{aligned}
\end{equation}
\end{lemma}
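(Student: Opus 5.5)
The plan is to treat the level sets near a regular value $t$ as a normal variation of $\Sigma_t$ and apply the standard first variation formulas for the area element and the mean curvature. Since $u\to 1$ at infinity and $\Sigma=\{u=0\}$, the level sets $\Sigma_t$ with $t<1$ are compact. Hence if $t$ is a regular value, all nearby values are regular as well. On such an interval the vector field $X=\nabla u/|\nabla u|^{2}$ has the property that its flow maps $\Sigma_t$ to $\Sigma_{t+s}$. It is a normal variation with unit normal $\nu=\nabla u/|\nabla u|$ and speed $\varphi=1/|\nabla u|$. With $H=\operatorname{div}\nu$, the evolution of the area element is $\partial_t\, d\sigma = H\varphi\, d\sigma = \frac{H}{|\nabla u|}\,d\sigma$.

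\textbf{Step 1 (derivative of $|\nabla u|$).} Write $\Delta u=\nabla^2u(\nu,\nu)+H|\nabla u|$. Harmonicity then gives $\nabla^2u(\nu,\nu)=-H|\nabla u|$. Therefore
$$\partial_t|\nabla u| = X(|\nabla u|)=\frac{\nabla^2u(\nu,\nu)}{|\nabla u|}=-H.$$
Combining this with the area evolution,
$$\frac{d}{dt}\Big(|\nabla u|^{2}d\sigma\Big)=\Big(-2H|\nabla u|+|\nabla u|^{2}\frac{H}{|\nabla u|}\Big)d\sigma=-H|\nabla u|\,d\sigma.$$
Integrating over $\Sigma_t$ yields the first formula.

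\textbf{Step 2 (derivative of $H|\nabla u|$).} Use the variation formula $\partial_t H=-\Delta_{\Sigma_t}\varphi-(|\mathbb{II}|^{2}+\mathrm{Ric}(\nu,\nu))\varphi$ with $\varphi=1/|\nabla u|$. Combining it with Step 1 and the area evolution, the $H^{2}$ contributions from $\partial_t|\nabla u|=-H$ and from the area term cancel, leaving
$$\frac{d}{dt}\Big(H|\nabla u|\,d\sigma\Big)=\Big[-|\nabla u|\,\Delta_{\Sigma_t}\frac{1}{|\nabla u|}-|\mathbb{II}|^{2}-\mathrm{Ric}(\nu,\nu)\Big]d\sigma .$$
Integrating by parts on the closed surface $\Sigma_t$ gives
$$-\int_{\Sigma_t}|\nabla u|\,\Delta_{\Sigma_t}\frac{1}{|\nabla u|}=\int_{\Sigma_t}\nabla^{\Sigma_t}|\nabla u|\cdot\nabla^{\Sigma_t}\frac{1}{|\nabla u|}=-\int_{\Sigma_t}\frac{|\nabla^{\Sigma_t}|\nabla u||^{2}}{|\nabla u|^{2}}.$$

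\textbf{Step 3 (Gauss equation).} The traced Gauss equation gives $2\mathrm{Ric}(\nu,\nu)=R_g-R_{\Sigma_t}+H^{2}-|\mathbb{II}|^{2}$. Decomposing $|\mathbb{II}|^{2}=|\overset{\circ}{\mathbb{II}}|^{2}+\tfrac12H^{2}$, we get
$$-|\mathbb{II}|^{2}-\mathrm{Ric}(\nu,\nu)=-\tfrac12|\overset{\circ}{\mathbb{II}}|^{2}-\tfrac34H^{2}+\tfrac12R_{\Sigma_t}-\tfrac12R_g,$$
which is exactly the second formula.

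The only delicate point is justifying the differentiation under the integral. This needs regularity of nearby values (addressed above by compactness of the level sets) and fixed sign conventions for $H$ and $\nu$ consistent with Step 1. The rest is the standard first variation computation.
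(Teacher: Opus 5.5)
Your proposal is correct and is essentially the standard argument behind this lemma. The paper gives no proof and only cites Miao, whose derivation is the same first-variation computation along the flow of $\nabla u/|\nabla u|^{2}$: it uses $\partial_t|\nabla u|=-H$, the variation of $H$ with normal speed $|\nabla u|^{-1}$, integration by parts on the closed surface $\Sigma_t$, and the traced Gauss equation, and your signs and the cancellation of the $H^{2}$ terms are all right.
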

To establish the monotonicity of $ \mathcal{D}_{k}(t) $, $ \mathcal{A}_{k}(t) $ and $ \mathcal{B}_{k}(t) $, we need the following regularization technique. The following argument follows from Miao \cite{Miao} and we extend the Miao's argument \cite{Miao} to the general case.
\begin{lemma}\label{f(u)1}
Let $ u $ be a harmonic function on a compact Riemannian manifold $ (\Omega,g) $ with boundary $ \partial \Omega $. Suppose $ f $ is a smooth positive function on $ \Omega $. Then
\begin{equation}
\int_{\partial \Omega}f(u)|\nabla u|\frac{\partial u}{\partial \zeta}=\int_{\Omega}f^{\prime}(u)|\nabla u|^{3}+\int_{\{|\nabla u|\neq 0\}} f(u)\frac{\nabla^{2}u(\nabla u,\nabla u)}{|\nabla u|}.
\end{equation}
Here $ \zeta $ denotes the unit normal to $ \partial \Omega $ pointing out of $ \Omega $.
\end{lemma}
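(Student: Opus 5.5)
The idea is to apply the divergence theorem to the vector field $X=f(u)|\nabla u|\nabla u$. The difficulty is that $|\nabla u|$ is only Lipschitz near the critical set $\{|\nabla u|=0\}$, so I would work with the regularization $|\nabla u|_\epsilon:=\sqrt{|\nabla u|^{2}+\epsilon}$, following Miao \cite{Miao}. Set $X_\epsilon=f(u)\,|\nabla u|_\epsilon\,\nabla u$, which is smooth on $\Omega$. Since $\Delta u=0$,
$$
\operatorname{div}X_\epsilon=f'(u)|\nabla u|_\epsilon|\nabla u|^{2}+f(u)\frac{\nabla^{2}u(\nabla u,\nabla u)}{|\nabla u|_\epsilon},
$$
using $\nabla|\nabla u|_\epsilon=\nabla^{2}u(\nabla u,\cdot)/|\nabla u|_\epsilon$. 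The divergence theorem on the compact manifold $\Omega$ then gives
$$
\int_{\partial\Omega}f(u)|\nabla u|_\epsilon\frac{\partial u}{\partial\zeta}=\int_\Omega f'(u)|\nabla u|_\epsilon|\nabla u|^{2}+\int_\Omega f(u)\frac{\nabla^{2}u(\nabla u,\nabla u)}{|\nabla u|_\epsilon}.
$$

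Next, let $\epsilon\to0$. The boundary term and the first interior term converge by dominated convergence, since $|\nabla u|_\epsilon\le|\nabla u|+1$ for $\epsilon\le1$ and everything is bounded on compact $\Omega$. Their limits are $\int_{\partial\Omega}f(u)|\nabla u|\partial_\zeta u$ and $\int_\Omega f'(u)|\nabla u|^3$.

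For the last term, the integrand vanishes on $\{|\nabla u|=0\}$. Everywhere else it is bounded in absolute value by $|f(u)||\nabla^2u||\nabla u|^{2}/|\nabla u|_\epsilon\le |f||\nabla^{2}u||\nabla u|$, which is integrable. It converges pointwise to $f(u)\nabla^{2}u(\nabla u,\nabla u)/|\nabla u|$ on $\{|\nabla u|\ne0\}$, so dominated convergence gives the integral over $\{|\nabla u|\neq0\}$ stated in the lemma.

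The only delicate point is this last limit, where the integrand might appear singular near critical points. The domination $|\nabla u|^{2}/|\nabla u|_\epsilon\le|\nabla u|$ removes the difficulty. Positivity of $f$ is not actually needed beyond smoothness.
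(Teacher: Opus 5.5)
Your proposal is correct and is essentially the paper's proof. You use the same regularized field $f(u)\sqrt{|\nabla u|^{2}+\epsilon}\,\nabla u$, the same divergence identity, and the same limit $\epsilon\to 0$; your bound $|\nabla u|^{2}/\sqrt{|\nabla u|^{2}+\epsilon}\le|\nabla u|$ simply spells out the dominated-convergence step that the paper leaves implicit.
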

\begin{proof}
Given any constant $ \epsilon>0 $, we have
$$
div\Big(f(u)\sqrt{|\nabla u|^{2}+\epsilon}\nabla u\Big)
=f^{\prime}(u)\sqrt{|\nabla u|^{2}+\epsilon}|\nabla u|^{2}+f(u)\frac{\nabla^{2}u(\nabla u,\nabla u)}{\sqrt{|\nabla u|^{2}+\epsilon}}.
$$
Therefore,
$$
\begin{aligned}
\int_{\partial \Omega}f(u)\sqrt{|\nabla u|^{2}+\epsilon}\frac{\partial u}{\partial \zeta}=&\int_{\Omega}f^{\prime}(u)\sqrt{|\nabla u|^{2}+\epsilon}|\nabla u|^{2}+\int_{\Omega}f(u)\frac{\nabla^{2}u(\nabla u,\nabla u)}{\sqrt{|\nabla u|^{2}+\epsilon}}\\
=&\int_{\Omega}f^{\prime}(u)\sqrt{|\nabla u|^{2}+\epsilon}|\nabla u|^{2}\\
&+\int_{\{|\nabla u|\neq 0\}}f(u)\nabla^{2}u\Big(\frac{\nabla u}{|\nabla u|},\frac{\nabla u}{|\nabla u|}\Big)\frac{|\nabla u|^{2}}{\sqrt{|\nabla u|^{2}+\epsilon}}.
\end{aligned}
$$
Taking $ \epsilon\to 0 $ gives the desired result.
\end{proof}
\begin{lemma}\label{f(u)2}
Let $ u $ be a harmonic function on a compact Riemannian manifold $ (\Omega,g) $ with boundary $ \partial \Omega $. Suppose $ f $ is a smooth positive function on $ \Omega $ and $ u $ equals a constant on each connected component of $ \partial \Omega $. Then
\begin{equation}\label{key}
\int_{\partial \Omega}f(u)|\nabla u|H\leq \int_{t_{1}}^{t_{2}}f(u) \Big\{\int_{\Sigma_{t}}\Big[\frac{f^{\prime}(u)}{f(u)}|\nabla u|H-\frac{1}{2}\Big(\frac{|\nabla^{2}u|^{2}}{|\nabla u|^{2}}+R_{g}\Big)\Big]+2\pi \chi(\Sigma_{t}) \Big\}.
\end{equation} 
Here the mean curvature $ H $ of $ \partial \Omega $ is taken with respect to the unit normal $ \zeta $ pointing out of $ \Omega $, the mean curvature $ H $ of a regular level set $ \Sigma_{t} $ is taken with respect to $ \nu=|\nabla u|^{-1}\nabla u $, $ \chi (\Sigma_{t}) $ is the Euler characteristic of $ \Sigma_{t} $, $ t_{1}=\min_{\Omega} u $, and $ t_{2}=\max_{\Omega} u $.
\end{lemma}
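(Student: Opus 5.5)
The inequality comes from integrating the derivative formula of Lemma \ref{deriva} against the weight $f(u)$, plus Gauss–Bonnet on each regular level set. The first difficulty is that $u$ may have critical points, so $f(u)|\nabla u|H$ is not globally a divergence. I would follow Miao's device: work with the vector field
\[
X=f(u)\Big(\nabla|\nabla u|-\Delta u\,\frac{\nabla u}{|\nabla u|}\Big)=f(u)\nabla|\nabla u|,
\]
using that on regular points $\nabla|\nabla u|\cdot\nu=\nabla^{2}u(\nu,\nu)=-H|\nabla u|$ (since $\Delta u=0$). To handle the critical set I would use the regularized quantity $\sqrt{|\nabla u|^{2}+\epsilon}$, as in the proof of Lemma \ref{f(u)1}. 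That is, integrate
\[
\operatorname{div}\Big(f(u)\frac{\nabla^{2}u(\nabla u,\cdot)}{\sqrt{|\nabla u|^{2}+\epsilon}}\Big)
\]
over $\Omega$ and let $\epsilon\to0$.

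On $\partial\Omega$, where $u$ is locally constant, $\nabla u$ is normal. The boundary term then becomes $\pm f(u)\nabla^{2}u(\nu,\nu)=\mp f(u)H|\nabla u|$, with the sign fixed by comparing $\zeta$ and $\nu$; this gives the left side $\int_{\partial\Omega}f(u)|\nabla u|H$ (components where $\nabla u$ vanishes contribute nothing). In the interior the divergence splits into three terms.
\begin{itemize}
\item The first is $f'(u)\nabla^{2}u(\nabla u,\nabla u)/\sqrt{\cdot}$.
\item The second is $f(u)\,\mathrm{Ric}(\nabla u,\nabla u)/\sqrt{\cdot}$, using Bochner and $\Delta u=0$.
\item The third is $f(u)\big(|\nabla^{2}u|^{2}/\sqrt{\cdot}-|\nabla^{2}u(\nabla u)|^{2}/(\cdot)^{3/2}\big)$.
\end{itemize}
Since the third term is nonnegative, Fatou or monotone convergence lets me pass $\epsilon\to0$ and get an inequality restricted to $\{|\nabla u|\neq0\}$. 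The critical set has measure zero because $u$ is harmonic and nonconstant.

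Next I would apply the coarea formula on the regular part, converting $\int_{\{|\nabla u|\ne0\}}F=\int_{t_1}^{t_2}\int_{\Sigma_t}F/|\nabla u|$. By Sard almost every $t$ is regular. On a regular $\Sigma_t$ I rewrite the integrand with the standard identity from Lemma \ref{deriva}'s derivation:
\[
\frac{\mathrm{Ric}(\nu,\nu)|\nabla u|^{2}+|\nabla^{2}u|^{2}-|\nabla|\nabla u||^{2}}{|\nabla u|}.
\]
The Gauss equation gives $\mathrm{Ric}(\nu,\nu)=\tfrac12R_g-\tfrac12R_{\Sigma_t}+\tfrac12(H^2-|\mathbb{II}|^2)$. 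I also need the harmonic-function identity $|\nabla^2u|^2-|\nabla|\nabla u||^2=|\nabla u|^2|\mathbb{II}|^2+|\nabla^{\Sigma_t}|\nabla u||^2$. The first-order term gives $f'(u)\nabla^2u(\nu,\nu)|\nabla u|=-f'(u)H|\nabla u|^2$, which after coarea becomes $-\int f'H|\nabla u|$ per level. With the overall sign arranged, this produces the term $\frac{f'}{f}|\nabla u|H$ with a positive sign.

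For the remaining terms, the Gauss–Bonnet theorem turns $\tfrac12\int_{\Sigma_t}R_{\Sigma_t}=2\pi\chi(\Sigma_t)$. I then have to bound the quadratic terms by $-\tfrac12\big(|\nabla^2u|^2/|\nabla u|^2+R_g\big)$, and I would do this with the refined Kato-type inequality for harmonic functions. This reorganization of the curvature terms into exactly $-\tfrac12(|\nabla^2u|^2|\nabla u|^{-2}+R_g)+2\pi\chi$ is the step I expect to be most delicate. It is Stern's computation $2\,\mathrm{Ric}(\nu,\nu)=R_g-R_{\Sigma}+H^2-|\mathbb{II}|^2$ combined with $\Delta u=0$, which yields $|\nabla u|^{-1}(\ldots)=\tfrac12(|\nabla^2u|^2/|\nabla u|^2+R_g-R_{\Sigma_t})$ plus a nonnegative remainder. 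Discarding that remainder is where the inequality comes from, and the signs must be tracked carefully. Finally, since $f>0$, multiplying through and using that $t$ is constant on $\Sigma_t$ to pull $f(t)$ outside gives \eqref{key}.
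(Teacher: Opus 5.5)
Your proposal is correct and is essentially the paper's argument: the same regularized field $f(u)\nabla\sqrt{|\nabla u|^{2}+\epsilon}$, the same boundary computation, Bochner and Stern's formula on regular level sets, then coarea and Gauss--Bonnet. The only variation is that you pass $\epsilon\to0$ by Fatou's lemma on the nonnegative Hessian part and dominated convergence on the $f'$ and Ricci parts, whereas the paper splits $\Omega$ into $u^{-1}(D)$ and $u^{-1}(W)$, uses a crude Ricci lower bound on $u^{-1}(W)$, and then shrinks $W$. One correction: on a regular level set $\Delta|\nabla u|/|\nabla u|=\tfrac12\big(|\nabla^{2}u|^{2}/|\nabla u|^{2}+R_g-R_{\Sigma_t}\big)$ holds exactly, so no refined Kato inequality or discarded remainder is involved; the inequality in the lemma comes solely from the $\epsilon\to0$ limit, i.e.\ your Fatou step.
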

\begin{proof}
For any constant $ \epsilon>0 $, one has
$$
div\Big(f(u)\nabla \sqrt{|\nabla u|^{2}+\epsilon}\Big)=f(u)\Delta \sqrt{|\nabla u|^{2}+\epsilon}+f^{\prime}(u)\frac{\nabla^{2}u(\nabla u,\nabla u)}{\sqrt{|\nabla u|^{2}+\epsilon}}.
$$
Therefore,
\begin{equation}\label{f(u) integral identity}
\int_{\partial \Omega}f(u)\partial_{\zeta}\sqrt{|\nabla u|^{2}+\epsilon}=\int_{\Omega}f(u)\Delta \sqrt{|\nabla u|^{2}+\epsilon}+\int_{\Omega}f^{\prime}(u)\frac{\nabla^{2}u(\nabla u,\nabla u)}{\sqrt{|\nabla u|^{2}+\epsilon}}.
\end{equation}
Notice that the mean curvature of $ \Sigma_{t} $ is 
\begin{equation}\label{H}
H=-\frac{1}{|\nabla u|}\Big\langle \nabla |\nabla u|,\frac{\nabla u}{|\nabla u|} \Big\rangle=-\frac{1}{|\nabla u|^{3}}\nabla^{2}u(\nabla u,\nabla u).
\end{equation}
Since $ u $ is constant on each connected component of $ \partial \Omega $, the mean curvature of $ \partial \Omega $ is
$$
H=-\frac{1}{|\nabla u|}\Big\langle \nabla |\nabla u|,\frac{\nabla u}{|\nabla u|} \Big\rangle=-\frac{1}{|\nabla u|}\partial_{\zeta}|\nabla u|.
$$
Thus we have 
$$
\partial_{\zeta}\sqrt{|\nabla u|^{2}+\epsilon}=-\frac{|\nabla u|^{2}}{\sqrt{|\nabla u|^{2}+\epsilon}}H.
$$
Hence,
\begin{equation}\label{f H}
\lim\limits_{\epsilon\to 0}\int_{\partial \Omega}f(u)\partial_{\zeta}\sqrt{|\nabla u|^{2}+\epsilon}=-\int_{\partial \Omega}f(u)|\nabla u|H.
\end{equation}
Similar to the proof of Lemma~\ref{f(u)1}, taking $ \epsilon \to 0 $ in the third term in (\ref{f(u) integral identity}) and using (\ref{H}) gives
\begin{equation}\label{f hess}
\begin{aligned}
\lim\limits_{\epsilon\to 0}\int_{\Omega}f^{\prime}(u)\frac{\nabla^{2}u(\nabla u, \nabla u)}{\sqrt{|\nabla u|^{2}+\epsilon}}&=\int_{\{|\nabla u|\neq 0\}}f^{\prime}(u)\frac{\nabla^{2}u(\nabla u, \nabla u)}{|\nabla u|}\\
&=-\int_{\{|\nabla u|\neq 0\}}f^{\prime}(u)H|\nabla u|^{2}\\
&=-\int_{t_{1}}^{t_{2}}f^{\prime}(u)\int_{\Sigma_{t}}|\nabla u|H.
\end{aligned}
\end{equation}
To deal with the second term in (\ref{f(u) integral identity}), we follow the method of Stern \cite{Stern} (see also \cite{Miao}).

Let $ \mathcal{C} $ denote the set of critical values of $ u $ in $ [t_{1},t_{2}] $. Let $ W $ denote an open set of $ [t_{1},t_{2}] $ such that $ W $ contains $ \mathcal{C} $. Let $ D $ be the complement of $ W $ in  $ [t_{1},t_{2}] $.

On $ u^{-1}(D) $, $ f(u)\Delta \sqrt{|\nabla u|^{2}+\epsilon} $ is integrable. By coarea formula,
$$
\int_{u^{-1}(D)}f(u)\Delta \sqrt{|\nabla u|^{2}+\epsilon}=\int_{D}\int_{\Sigma_{t}}f(u)\frac{\Delta \sqrt{|\nabla u|^{2}+\epsilon}}{|\nabla u|}.
$$
By Stern's formula \cite{Stern},
$$
\Delta \sqrt{|\nabla u|^{2}+\epsilon}\geq \frac{1}{2\sqrt{|\nabla u|^{2}+\epsilon}}[|\nabla^{2}u|^{2}+(R_{g}-2K_{\Sigma_{t}})|\nabla u|^{2}],
$$
where $ K_{\Sigma_{t}} $ is the Gauss curvature of $ \Sigma_{t} $. Hence,
\begin{equation}\label{Delta f}
\int_{u^{-1}(D)}f(u)\Delta \sqrt{|\nabla u|^{2}+\epsilon}\geq \int_{D}\int_{\Sigma_{t}}\frac{f(u)}{|\nabla u|}\frac{[|\nabla^{2}u|^{2}+(R_{g}-2K_{\Sigma_{t}})|\nabla u|^{2}]}{2\sqrt{|\nabla u|^{2}+\epsilon}}.
\end{equation}
With $ W $ fixed, letting $ \epsilon\to 0 $ in (\ref{Delta f}) and using Gauss-Bonnet theorem, we get
\begin{equation}\label{D}
\begin{aligned}
\lim\limits\inf_{\epsilon\to 0}\int_{u^{-1}(D)}f(u)\Delta \sqrt{|\nabla u|^{2}+\epsilon}&\geq \int_{D}\int_{\Sigma_{t}}f(u)\frac{[|\nabla^{2}u|^{2}+(R_{g}-2K_{\Sigma_{t}})|\nabla u|^{2}]}{2|\nabla u|^{2}}\\
&=\int_{D}f(t)\Big[\int_{\Sigma_{t}}\frac{1}{2}\Big(\frac{|\nabla^{2}u|^{2}}{|\nabla u|^{2}}+R_{g}\Big)-2\pi \chi(\Sigma_{t}) \Big].
\end{aligned}
\end{equation}
To estimate the integral on $ u^{-1}(W) $, we note that 
$$
\begin{aligned}
\Delta \sqrt{|\nabla u|^{2}+\epsilon}&=div\Big[\frac{\nabla |\nabla u|^{2}}{2\sqrt{|\nabla u|^{2}+\epsilon}}\Big]\\
&=\frac{1}{\sqrt{|\nabla u|^{2}+\epsilon}}\Big[|\nabla^{2}u|^{2}+\mathrm{Ric}(\nabla u,\nabla u)-\frac{|\nabla u|^{2}}{|\nabla u|^{2}+\epsilon}|\nabla |\nabla u||^{2} \Big]\\
&\geq \frac{1}{\sqrt{|\nabla u|^{2}+\epsilon}}\mathrm{Ric}(\nabla u,\nabla u).
\end{aligned}
$$
This implies that 
\begin{equation}\label{W}
\begin{aligned}
\int_{u^{-1}(W)}f(u)\Delta \sqrt{|\nabla u|^{2}+\epsilon}&\geq -\max_{\Omega}|\mathrm{Ric}|\int_{u^{-1}(W)}\frac{|\nabla u|^{2}}{\sqrt{|\nabla u|^{2}+\epsilon}}f(u)\\
&\geq -\max_{\Omega}|\mathrm{Ric}|\int_{u^{-1}(W)}|\nabla u|f(u)\\
&=-\max_{\Omega}|\mathrm{Ric}|\int_{W}\int_{\Sigma_{t}}f(u).
\end{aligned}
\end{equation}
Combining (\ref{D}) with (\ref{W}), we obtain 
$$
\lim\limits_{\epsilon\to 0}\int_{\Omega}f(u)\Delta \sqrt{|\nabla u|^{2}+\epsilon}\geq \int_{D}f(t)\Big[\int_{\Sigma_{t}}\frac{1}{2}\Big(\frac{|\nabla^{2}u|^{2}}{|\nabla u|^{2}}+R_{g}\Big)-2\pi \chi(\Sigma_{t}) \Big]-\max_{\Omega}|\mathrm{Ric}|\int_{W}\int_{\Sigma_{t}}f(u).
$$
Since $ \int_{\Omega}|\nabla u|f(u)<\infty $, we can choose $ W $, such that the measure of $ W $ is arbitrarly small. Then one has
\begin{equation}\label{f Delta}
\lim\limits_{\epsilon\to 0}\int_{\Omega}f(u)\Delta \sqrt{|\nabla u|^{2}+\epsilon}\geq \int_{t_{1}}^{t_{2}}f(t)\Big[\int_{\Sigma_{t}}\frac{1}{2}\Big(\frac{|\nabla^{2}u|^{2}}{|\nabla u|^{2}}+R_{g}\Big)-2\pi \chi(\Sigma_{t}) \Big].
\end{equation}
From (\ref{f(u) integral identity}), (\ref{f H}), (\ref{f hess}) and (\ref{f Delta}), we obtain the desired result. 
\end{proof}

\section{\bf Monotone quantities}
In this section, our aim is to prove Theorem~\ref{th 1.1} and Theorem~\ref{th 1.2}. We derive monotone quantities for harmonic functions on asymptotically flat 3-manifolds with simple topology and non
negative scalar curvature. These monotone quantities are constant
on spatial Schwarzschild manifolds outside rotationally symmetric spheres. To derive monotone quantities, our method is different from the ODE analysis in \cite{Xia} and \cite{Maz}, we follow the strategy developed in the work of \cite{Miao}, we first establish the monotone quantity (\ref{D t}) then derive monotone quantities (\ref{P}) and recover two monotone quantities (\ref{Q}) and (\ref{F}) in \cite{Miao 0} and \cite{Xia}, respectively. Then, we apply the monotone quantity (\ref{F}) to derive the monotone quantity (\ref{G}). 

The following monotone quantitity $ \mathcal{D}_{k}(t) $ was investigated in \cite{Miao, Hrisch1} for p-harmonic functions with $ p\in (1,2] $ and $ k=1 $. In \cite{Xia}, Xia-Yin-Zhou studied $ \mathcal{D}_{k}(t) $ for p-harmonic functions with $ p\in (1,3) $ and $ k\in (0,2] $. For convenience, we reprove this result in the case of harmonic functions for $ k>0 $.

\begin{lemma}\label{D mono}
Let $ (\Omega,g) $ be a compact, Riemannian 3-manifold with boundary $ \partial \Omega $. Suppose $ \partial \Omega $ has two connected components $ S_{1} $ and $ S_{2} $. Let $ u $ be a harmonic function on $ (\Omega,g) $ such that $ u=c_{i} $ on $ S_{i} $, $ i=1,2 $, where $ c_{1}, c_{2} $ are constants with $ c_{1}<c_{2}<1 $. If $ R_{g}\geq 0 $ and the level sets $ \Sigma_{t}:=u^{-1}(t) $ is connected for $ t\in [c_{1},c_{2}] $, then
\begin{equation}\label{D t}
\mathcal{D}_{k}(t)=\frac{4\pi(1-t)}{k\omega(t)}+\frac{1}{\omega(t)^{2}}\int_{\Sigma_{t}}|\nabla u|H-\frac{4\omega(t)-k}{(1-t)\omega(t)^{3}}\int_{\Sigma_{t}}|\nabla u|^{2}
\end{equation}
is nonincreasing in $ t $. Here $ t\in [c_{1},c_{2}] $ denotes a regular value of $ u $ and $ H $ is the mean curvature of $ \Sigma_{t} $ with respect to the unit normal $ \nu=|\nabla u|^{-1}\nabla u $.
\end{lemma}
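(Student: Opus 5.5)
The plan is to follow Miao's strategy for $k=1$. Fix two regular values $c_1\le t_1<t_2\le c_2$ and set $\Omega'=\{t_1\le u\le t_2\}$. I will express $\mathcal{D}_k(t_2)-\mathcal{D}_k(t_1)$ as an integral over $\Omega'$, equivalently over $[t_1,t_2]$ via the coarea formula, and show that the integrand is pointwise nonpositive. Since the derivative formulas of Lemma~\ref{deriva} hold only at regular values, I will not differentiate directly. Instead I will use the integral versions Lemma~\ref{f(u)1} and Lemma~\ref{f(u)2}, which are valid across critical values.

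\textbf{The $H$-term.} Apply Lemma~\ref{f(u)2} on $\Omega'$ with $f(u)=\omega(u)^{-2}$. The outward normal is $\nu$ on $\Sigma_{t_2}$ and $-\nu$ on $\Sigma_{t_1}$, so the left side of (\ref{key}) equals $\omega(t_2)^{-2}\int_{\Sigma_{t_2}}|\nabla u|H-\omega(t_1)^{-2}\int_{\Sigma_{t_1}}|\nabla u|H$, which is the difference of the middle term of $\mathcal{D}_k$. Since each $\Sigma_t$ is connected, $\chi(\Sigma_t)\le 2$, so the Gauss--Bonnet contribution is at most $4\pi\int_{t_1}^{t_2}\omega^{-2}\,dt$.

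\textbf{The first term.} Using $\omega+(k-1)(1-t)=k$, we get
\[
\frac{d}{dt}\Big[\frac{4\pi(1-t)}{k\omega}\Big]=-\frac{4\pi}{\omega^{2}}.
\]
This cancels exactly against the Gauss--Bonnet contribution, so the $4\pi$ terms disappear.

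\textbf{The $|\nabla u|^2$-term.} Apply Lemma~\ref{f(u)1} on $\Omega'$ with $f(u)=\frac{4\omega(u)-k}{(1-u)\omega(u)^{3}}$ (positive for $u<1$ when $4\omega>k$; otherwise split $f$ into positive pieces or use the identity directly, since positivity is not used in its proof). Use $\nabla^2u(\nabla u,\nabla u)/|\nabla u|=-H|\nabla u|^2$ and the coarea formula. This rewrites $f(t_2)\int_{\Sigma_{t_2}}|\nabla u|^2-f(t_1)\int_{\Sigma_{t_1}}|\nabla u|^2$ as
\[
\int_{t_1}^{t_2}\int_{\Sigma_t}\big(f'(t)|\nabla u|^2-f(t)H|\nabla u|\big).
\]

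\textbf{Pointwise estimate.} Collecting the three pieces, $\mathcal{D}_k(t_2)-\mathcal{D}_k(t_1)$ is bounded above by $\int_{t_1}^{t_2}\int_{\Sigma_t}Q$, where
\[
Q=-\frac{1}{2\omega^2}\Big(\frac{|\nabla^2u|^2}{|\nabla u|^2}+R_g\Big)+a(t)H|\nabla u|-f'(t)|\nabla u|^2,
\qquad a=-\frac{2(k-1)}{\omega^3}+f .
\]
At regular points I will use the decomposition
\[
|\nabla^2u|^2=|\nabla u|^2|\mathbb{II}|^2+2|\nabla^{\Sigma}|\nabla u||^2+H^2|\nabla u|^2 .
\]
Combined with $|\mathbb{II}|^2\ge \tfrac12H^2$ and $R_g\ge0$, this gives
\[
Q\le -\frac{3}{4\omega^2}H^2+aH|\nabla u|-f'|\nabla u|^2 .
\]
This is a quadratic form in $(H,|\nabla u|)$. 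Since $a^2\omega^2\ge 0$ (so the $|\nabla u|^2$ coefficient cannot be positive without $f'$ compensating), it is nonpositive exactly when the discriminant condition
\[
a^2\omega^2\le \tfrac{4}{3}f'
\]
holds. This step is where the choice of coefficients in $\mathcal{D}_k$ enters.

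\textbf{Main obstacle.} The crux is the algebraic verification of $a^2\omega^2\le\frac43 f'$ for all $t<1$ and all $k>0$. I expect it to hold with equality, i.e.\ the quadratic form is a perfect square $-\frac{3}{4\omega^2}\big(H-\lambda(t)|\nabla u|\big)^2$ with $\lambda=\frac{2(2\omega-k)}{3(1-t)\omega}\cdot\frac{3}{\,\cdot\,}$-type coefficient, matching $\Psi_u$ in (\ref{Psi}). Equality should occur because $\mathcal{D}_k$ is constant on the Schwarzschild model with $\mathfrak{m}=2r_0(k-1)$. I will first compute $f'$ explicitly, simplifying with $\omega'=k-1$ and $k-(k-1)(1-t)=\omega$, and then check the identity.

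If the discriminant is not identically zero, I will instead keep the discarded term $|\mathring{\mathbb{II}}|^2$ and the tangential gradient term to absorb the remainder. The passage through critical values is already handled inside Lemma~\ref{f(u)1} and Lemma~\ref{f(u)2}. The inequality $\mathcal{D}_k(t_2)\le\mathcal{D}_k(t_1)$ for all regular $t_1<t_2$ then gives the claim.
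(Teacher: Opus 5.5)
Your setup is the paper's proof. You apply Lemma~\ref{f(u)2} with $f=\omega^{-2}$, and use $\chi(\Sigma_t)\le 2$ so the Gauss--Bonnet term cancels $\frac{d}{dt}\frac{4\pi(1-t)}{k\omega}=-\frac{4\pi}{\omega^2}$. You then apply Lemma~\ref{f(u)1} with $f=\frac{4\omega-k}{(1-u)\omega^3}$ and complete the square pointwise on regular level sets. Your expression for $Q$ is correct.

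The gap is that the one nontrivial step is left unchecked, and the criterion you wrote for it is wrong. The form $-\frac{3}{4\omega^2}H^2+aH|\nabla u|-f'|\nabla u|^2$ is nonpositive iff $f'\ge 0$ and $a^2\le 4\cdot\frac{3}{4\omega^2}\cdot f'$, i.e. $a^2\omega^2\le 3f'$, not $a^2\omega^2\le \frac43 f'$. With the true values your stated condition fails. Your fallback could not rescue it: $|\overset{\circ}{\mathbb{II}}|^2$ and $|\nabla^{\Sigma_t}|\nabla u||^2/|\nabla u|^2$ do not involve $H$ or $|\nabla u|$, so they cannot absorb a positive remainder in $(H,|\nabla u|)$.

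The missing computation uses $(k-1)(1-t)=k-\omega$ throughout:
\begin{equation*}
a=\frac{4\omega-k-2(k-\omega)}{(1-t)\omega^{3}}=\frac{3(2\omega-k)}{(1-t)\omega^{3}},\qquad
(1-t)\frac{f'}{f}=\frac{4(k-\omega)}{4\omega-k}+1-\frac{3(k-\omega)}{\omega}=\frac{3(2\omega-k)^{2}}{\omega(4\omega-k)}.
\end{equation*}
Hence $f'=\frac{3(2\omega-k)^2}{(1-t)^2\omega^4}\ge 0$ and $a^2\omega^2=3f'$ exactly. Your inequality would instead read $9(2\omega-k)^2\le 4(2\omega-k)^2$, which is false unless $2\omega=k$. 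Therefore
\begin{equation*}
-\frac{3}{4\omega^{2}}H^{2}+aH|\nabla u|-f'|\nabla u|^{2}=-\frac{3}{4\omega^{2}}\Big(H-\frac{2(2\omega-k)}{(1-t)\omega}|\nabla u|\Big)^{2}\le 0 .
\end{equation*}
So your garbled $\lambda$ should be $\frac{2(2\omega-k)}{(1-t)\omega}$, which is $-\Psi_u(\nu,\nu)/|\nabla u|^2$. This is precisely the square the paper completes to reach (\ref{mono Psi}). Once you insert it, your argument is complete and coincides with the paper's.
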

\begin{proof}
Notice that $ R_{g}\geq 0 $ and $ \Sigma_{t} $ are connected. By Lemma~\ref{deriva}, at a regular value $ t $,  we have 
\begin{equation}\label{de leq}
\Big(\int_{\Sigma_{t}}|\nabla u|H\Big)^{\prime}\leq 4\pi-\frac{3}{4}\int_{\Sigma_{t}}H^{2},
\end{equation}
where we have used the Gauss-Bonnet theorem. Hence, at a regular value $ t $, we compute
$$
\begin{aligned}
-\mathcal{D}_{k}^{\prime}(t)&=\frac{1}{\omega(t)^{2}}\Big[4\pi+\frac{3(2\omega(t)-k)^{2}}{(1-t)^{2}\omega(t)^{2}}\int_{\Sigma_{t}}|\nabla u|^{2}-\frac{6\omega(t)-3k}{(1-t)\omega(t)}\int_{\Sigma_{t}}|\nabla u|H-\Big(\int_{\Sigma_{t}}|\nabla u|H\Big)^{\prime}\Big]\\
&\geq \frac{1}{\omega(t)^{2}}\Big[\frac{3(2\omega(t)-k)^{2}}{(1-t)^{2}\omega(t)^{2}}\int_{\Sigma_{t}}|\nabla u|^{2}-\frac{6\omega(t)-3k}{(1-t)\omega(t)}\int_{\Sigma_{t}}|\nabla u|H+\frac{3}{4}\int_{\Sigma_{t}}H^{2}\Big]\\
&=\frac{1}{\omega(t)^{2}}\int_{\Sigma_{t}}\frac{3}{4}\Big(H-\frac{2(2\omega-k)}{(1-u)\omega}|\nabla u|\Big)^{2}\\
&\geq 0.
\end{aligned}
$$
In general, let $ t_{1}<t_{2} $ be two regular values of $ u $. On $ \Omega_{[t_{1},t_{2}]}=\{x\in \Omega\,|\, t_{1}\leq u\leq t_{2} \} $, taking $ f(u)=\frac{1}{\omega(u)^{2}} $ in Lemma~\ref{f(u)2}, we get
\begin{equation}\label{H f}
\begin{aligned}
&\frac{1}{\omega(t_{1})^{2}}\int_{\Sigma_{t_{1}}}|\nabla u|H-\frac{1}{\omega(t_{2})^{2}}\int_{\Sigma_{t_{2}}}|\nabla u|H\\
&\geq \int_{t_{1}}^{t_{2}}\frac{1}{\omega(t)^{2}}\Big[\int_{\Sigma_{t}}\frac{1}{2}\Big(\frac{|\nabla^{2}u|^{2}}{|\nabla u|^{2}}+R_{g}+\frac{4(k-1)}{\omega(u)}|\nabla u|H\Big)-2\pi\chi(\Sigma_{t})\Big].
\end{aligned}
\end{equation}
Let $ \mathbb{II} $ denote the second fundamental form of $ \Sigma_{t} $ with respect to $ \nu $. Let $ X, Y $ be the vectors tangent to $ \Sigma_{t} $. Along $ \Sigma_{t} $, we have
$$
\nabla^{2}u(X,Y)=|\nabla u|\mathbb{II}(X,Y),\,\,\,\nabla^{2}u(X,\nu)=X(|\nabla u|),\,\,\,\nabla^{2}u(\nu,\nu)=-H|\nabla u|.
$$
Hence, we get
\begin{equation}\label{hess}
\begin{aligned}
\frac{|\nabla^{2}u|^{2}}{|\nabla u|^{2}}&=|\mathbb{II}|^{2}+2\frac{|\nabla^{\Sigma_{t}}|\nabla u||^{2}}{|\nabla u|^{2}}+H^{2}\\
&=|\overset{\circ}{\mathbb{II}}|^{2}+2\frac{|\nabla^{\Sigma_{t}}|\nabla u||^{2}}{|\nabla u|^{2}}+\frac{3}{2}H^{2},
\end{aligned}
\end{equation}
where $ \overset{\circ}{\mathbb{II}} $ denotes the traceless part of $ \mathbb{II} $.
By (\ref{H f}) and (\ref{hess}), one has
\begin{equation}\label{H f1}
\begin{aligned}
&\frac{1}{\omega(t_{1})^{2}}\int_{\Sigma_{t_{1}}}|\nabla u|H-\frac{1}{\omega(t_{2})^{2}}\int_{\Sigma_{t_{2}}}|\nabla u|H\\
&\geq \int_{t_{1}}^{t_{2}}\frac{1}{\omega(t)^{2}}\Big[\int_{\Sigma_{t}}\Big(\frac{1}{2}|\overset{\circ}{\mathbb{II}}|^{2}+\frac{|\nabla^{\Sigma_{t}}|\nabla u||^{2}}{|\nabla u|^{2}}+\frac{3}{4}H^{2}+\frac{1}{2}R_{g}+\frac{2(k-1)}{\omega(u)}|\nabla u|H\Big)-2\pi\chi(\Sigma_{t}) \Big].
\end{aligned}
\end{equation}
Now, we replace term $ H^{2} $ in (\ref{H f1}) with 
$$
\Big(H-\frac{2(2\omega-k)}{(1-u)\omega}|\nabla u|\Big)^{2}. 
$$
Thus, we can rewrite (\ref{H f1}) as 
 \begin{equation}
 \begin{aligned}
 &\frac{1}{\omega(t_{1})^{2}}\int_{\Sigma_{t_{1}}}|\nabla u|H-\frac{1}{\omega(t_{2})^{2}}\int_{\Sigma_{t_{2}}}|\nabla u|H\\
 &\geq \int_{t_{1}}^{t_{2}}\frac{1}{\omega(t)^{2}}\Big[\int_{\Sigma_{t}}\Big(\frac{1}{2}|\overset{\circ}{\mathbb{II}}|^{2}+\frac{|\nabla^{\Sigma_{t}}|\nabla u||^{2}}{|\nabla u|^{2}}+\frac{3}{4}\Big(H-\frac{2(2\omega-k)}{(1-u)\omega}|\nabla u|\Big)^{2}\\
 &\,\,\,\,\,\,+\frac{4\omega-k}{(1-u)\omega}|\nabla u|H-\frac{3(2\omega-k)^{2}}{(1-u)^{2}\omega^{2}}
 +\frac{1}{2}R_{g}\Big)-2\pi\chi(\Sigma_{t}) \Big].
 \end{aligned}
\end{equation}
Under the assumption that $ \Sigma_{t} $ is connected, we get
\begin{equation}\label{H f2}
\begin{aligned}
&\frac{4\pi}{k}\Big(\frac{1-t_{1}}{\omega(t_{1})}-\frac{1-t_{2}}{\omega(t_{2})}\Big)+\frac{1}{\omega(t_{1})^{2}}\int_{\Sigma_{t_{1}}}|\nabla u|H-\frac{1}{\omega(t_{2})^{2}}\int_{\Sigma_{t_{2}}}|\nabla u|H\\
&\geq \int_{t_{1}}^{t_{2}}\frac{1}{\omega(t)^{2}}\int_{\Sigma_{t}}\Big(\frac{1}{2}|\overset{\circ}{\mathbb{II}}|^{2}+\frac{|\nabla^{\Sigma_{t}}|\nabla u||^{2}}{|\nabla u|^{2}}+\frac{3}{4}\Big(H-\frac{2(2\omega-k)}{(1-u)\omega}|\nabla u|\Big)^{2}\\
&\,\,\,\,\,\,+\frac{4\omega-k}{(1-u)\omega}|\nabla u|H-\frac{3(2\omega-k)^{2}}{(1-u)^{2}\omega^{2}}
+\frac{1}{2}R_{g}\Big).
\end{aligned}
\end{equation}
On the other hand, taking $ f(u)=\frac{4\omega(u)-k}{(1-t)\omega(u)^{3}} $ in Lemma~\ref{f(u)1} gives
\begin{equation}\label{f 3}
\begin{aligned}
&\frac{4\omega(t_{2})-k}{(1-t_{2})\omega(t_{2})^{3}}\int_{\Sigma_{t_{2}}}|\nabla u|^{2}-\frac{4\omega(t_{1})-k}{(1-t_{1})\omega(t_{1})^{3}}\int_{\Sigma_{t_{1}}}|\nabla u|^{2}\\
&=\int_{\Omega}\frac{3(2\omega-k)^{2}}{(1-u)^{2}\omega^{4}}|\nabla u|^{3}+\int_{\{|\nabla u|\neq 0\}} \frac{4\omega-k}{(1-u)\omega^{3}}\frac{\nabla^{2}u(\nabla u, \nabla u)}{|\nabla u|}\\
&=\int_{\Omega}\frac{3(2\omega-k)^{2}}{(1-u)^{2}\omega^{4}}|\nabla u|^{3}-\int_{\{|\nabla u|\neq 0\}} \frac{4\omega-k}{(1-u)\omega^{3}}|\nabla u|^{2}H\\
&=\int_{t_{1}}^{t_{2}}\frac{1}{\omega(t)^{2}}\int_{\Sigma_{t}}\Big[\frac{3(2\omega-k)^{2}}{(1-u)^{2}\omega^{2}}|\nabla u|^{2}-\frac{4\omega-k}{(1-u)\omega}|\nabla u|H\Big].
\end{aligned}
\end{equation}
It follows from (\ref{H f2}) and (\ref{f 3}) that 
\begin{equation}\label{mono Psi}
\mathcal{D}_{k}(t_{1})-\mathcal{D}_{k}(t_{2})\geq \int_{t_{1}}^{t_{2}}\frac{1}{\omega(t)^{2}}\int_{\Sigma_{t}}\Big(\frac{1}{2}|\overset{\circ}{\mathbb{II}}|^{2}+\frac{|\nabla^{\Sigma_{t}}|\nabla u||^{2}}{|\nabla u|^{2}}+\frac{3}{4}\Big(H-\frac{2(2\omega-k)}{(1-u)\omega}|\nabla u|\Big)^{2}+\frac{1}{2}R_{g}\Big).
\end{equation}
Since $ R_{g}\geq 0 $, we conclude that $ \mathcal{D}_{k}(t_{1})\geq \mathcal{D}_{k}(t_{2}) $. This completes the proof of Lemma~\ref{Psi}.
\end{proof}
Applying the monotonicity of $ \mathcal{D}_{k}(t) $, one can recover the following geometric inequalities (\ref{t cp}) in \cite{Xia} and (\ref{0 cp}) in \cite{Miao}, respectively.

\begin{proposition}\label{pro}
Let $ (M,g) $ be a complete, orientable, one-ended asymptotically flat 3-manifold with compact, connnected boundary $ \Sigma $. Suppose that $ H_{2}(M,\Sigma)=0 $. Let $ u $ be the solution of (\ref{u}). If $ R_{g}\geq 0 $, then
\begin{equation}\label{t cp}
\frac{4\pi}{k}+\frac{1}{(1-t)\omega(t)}\int_{\Sigma_{t}}|\nabla u|H\geq \frac{4\omega(t)-k}{(1-t)^{2}\omega(t)^{2}}\int_{\Sigma_{t}}|\nabla u|^{2}.
\end{equation}
In particular, at $ \Sigma $,
\begin{equation}\label{0 cp}
4\pi+k\int_{\Sigma}|\nabla u|H\geq k(4-k)\int_{\Sigma}|\nabla u|^{2},
\end{equation}
and equality holds if and only if $ (M,g) $ is isometric to $ (M_{m,r_{0}},g_{m}) $ of mass $ \mathfrak{m}=2r_{0}(k-1) $ with $ r_{0}=\frac{\mathfrak{c}_{\Sigma}}{k} $.
\end{proposition}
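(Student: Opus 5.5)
The plan is to compare $\mathcal{D}_{k}(t)$ with its limit as $t\to 1$. The monotonicity in Lemma~\ref{D mono} then gives a pointwise inequality at every level, and (\ref{t cp}) is exactly the statement $\mathcal{D}_{k}(t)\geq 0$ after multiplying by $\omega(t)/(1-t)$.

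First, the topological input. Since $H_{2}(M,\Sigma)=0$, $\Sigma$ is connected and $M$ is one-ended, the argument in \cite{Miao} shows that every regular level set $\Sigma_{t}$ is connected. Otherwise a component of $\Sigma_{t}$ would bound a region, and the maximum principle together with the homology assumption would give a contradiction. So Lemma~\ref{D mono} applies on $\Omega=\{t_{1}\le u\le t_{2}\}$ for any regular values $t_{1}<t_{2}<1$. This shows that $\mathcal{D}_{k}$ is nonincreasing on regular values.

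Next, compute $\lim_{t\to1}\mathcal{D}_{k}(t)$ using Lemma~\ref{funda u}(iv) and $\omega(1)=k$:
\begin{align*}
\frac{1}{\omega^{2}}\int_{\Sigma_{t}}|\nabla u|H&=\frac{1-t}{\omega^{2}}\bigl(8\pi+O((1-t)^{\tau})\bigr),\\
\frac{4\omega-k}{(1-t)\omega^{3}}\int_{\Sigma_{t}}|\nabla u|^{2}&=\frac{(4\omega-k)(1-t)}{\omega^{3}}\bigl(4\pi+O((1-t)^{\tau})\bigr).
\end{align*}
Together with the first term $4\pi(1-t)/(k\omega)$, every term of $\mathcal{D}_{k}$ is $O(1-t)$. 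Hence $\mathcal{D}_{k}(t)\to0$. To leading order the coefficient is $4\pi/k^{2}+8\pi/k^{2}-12\pi/k^{2}=0$, which is consistent with the Schwarzschild model. By monotonicity, $\mathcal{D}_{k}(t)\ge\lim_{s\to1}\mathcal{D}_{k}(s)=0$ for every regular value $t$. Multiplying by $\omega(t)/(1-t)>0$ gives (\ref{t cp}). Setting $t=0$, where $\omega=1$ and $0$ is regular by the Hopf lemma, and multiplying by $k$ gives (\ref{0 cp}).

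For the equality case I use the integral formula (\ref{mono Psi}) with $t_{1}=0$ and $t_{2}\to1$. Equality in (\ref{0 cp}) means $\mathcal{D}_{k}(0)=0$, so the right-hand side of (\ref{mono Psi}) vanishes. This forces, on almost every level set:
\begin{itemize}
\item $\overset{\circ}{\mathbb{II}}=0$;
\item $\nabla^{\Sigma_{t}}|\nabla u|=0$;
\item $H=\frac{2(2\omega-k)}{(1-u)\omega}|\nabla u|$;
\item $R_{g}=0$.
\end{itemize}
In addition, the Gauss--Bonnet inequality used in Lemma~\ref{f(u)2} is an equality, which involves the Hessian term in Stern's formula. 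It follows that $|\nabla u|$ is constant on level sets and that $u$ has no critical points; to see the latter, the measure of the critical values must vanish and a standard argument removes them. So $M$ is foliated by umbilic spheres with constant $|\nabla u|$, and the metric takes the warped form $g=|\nabla u|^{-2}du^{2}+g_{\Sigma_{t}}$. The Gauss equation and the Riccati equation along the flow, combined with the $H$-relation, give an ODE system that integrates to the Schwarzschild profile. Matching the asymptotics $u=1-\mathfrak{c}_{\Sigma}/|x|+\dots$ identifies the mass $\mathfrak{m}=2r_{0}(k-1)$ with $r_{0}=\mathfrak{c}_{\Sigma}/k$. Conversely, a direct computation on $(M_{m,r_{0}},g_{m})$ shows that $\mathcal{D}_{k}\equiv0$.

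I expect the rigidity step to be the main obstacle: ruling out critical points and integrating the ODE system to recognize Schwarzschild with exactly these parameters. To handle it I would follow the corresponding rigidity argument in \cite{Miao} for $k=1$, and adapt the constants via $\omega$.
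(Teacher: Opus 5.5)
Your argument is the paper's: $\mathcal{D}_k(t)\geq\lim_{s\to1}\mathcal{D}_k(s)=0$ by (\ref{mono Psi}) and Lemma~\ref{funda u}(iv), then multiplication by $\omega(t)/(1-t)>0$; rigidity comes from the vanishing of the integrand in (\ref{Psi geq0}), integration of the resulting ODEs, and matching the asymptotics of $u$. The one step to fix is how you exclude critical points, where your stated reason proves nothing (by Sard the critical values always have measure zero): as in the paper, on the regular set the relations $\nabla^{\Sigma_t}|\nabla u|=0$, $\partial_\nu|\nabla u|=-H|\nabla u|$ and $H=\frac{2(2\omega-k)}{(1-u)\omega}|\nabla u|$ make $|\nabla u|(1-u)^{-2}\omega(u)^{-2}$ locally constant, hence equal to some $a>0$ fixed at $\Sigma$, and a continuity argument propagating from $\Sigma$ then gives $|\nabla u|=a(1-u)^{2}\omega(u)^{2}>0$ on all of $M$, so $u$ has no critical points.
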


\begin{proof}
Let $ \mathcal{D}_{k}(t) $ be defined as in (\ref{Psi}). Since $ (M,g) $ is asymptotically flat,  By Lemma~\ref{funda u} $ (iv) $, we have
$$
\lim\limits_{t\to1}\mathcal{D}_{k}(t)=0.
$$
Taking $ t_{2}\to 1 $ in (\ref{mono Psi}), we get
\begin{equation}\label{Psi geq0}
\mathcal{D}_{k}(t)\geq \int_{t}^{1}\frac{1}{\omega(s)^{2}}\int_{\Sigma_{s}}\Big(\frac{1}{2}|\overset{\circ}{\mathbb{II}}|^{2}+\frac{|\nabla^{\Sigma_{s}}|\nabla u||^{2}}{|\nabla u|^{2}}+\frac{3}{4}\Big(H-\frac{2(2\omega-k)}{(1-u)\omega}|\nabla u|\Big)^{2}+\frac{1}{2}R_{g}\Big)\geq 0.
\end{equation}
Hence, (\ref{t cp}) follows from (\ref{Psi geq0}). In fact, since $ k>0 $, we have $ \omega(t)>0 $ for $ t\in [0, 1) $. Therefore,  
$$
\frac{\omega(t)}{1-t}\mathcal{D}(t)\geq0.
$$
To prove the rigidity case of (\ref{0 cp}), suppose that the equality in (\ref{0 cp}) holds. From (\ref{Psi geq0}) and its proof, we conclude that for every regular value $ t\in [0,1] $, $ \Sigma_{t} $ is connected with $ \chi(\Sigma_{t})=2 $, thus $ \Sigma_{t} $ is a 2-sphere. Moreover, $ R_{g}=0 $, $ |\nabla^{\Sigma_{t}}|\nabla u||\equiv0 $, thus $ |\nabla u| $ only depends on $ t $, and 
\begin{equation}\label{key}
H=\frac{2(2\omega-k)}{(1-u)\omega}|\nabla u|=\frac{2(2\omega(t)-k)}{(1-t)\omega(t)}|\nabla u|.
\end{equation}
To show $ (M,g) $ is isometric to a exterior spatial Schwarzschild manifold of mass $ \mathfrak{m}=2(1-\frac{1}{k})\mathfrak{c} $, we start from a neighborhood of the boundary $ \Sigma $. Locally, $ g $ takes the form of 
$$
g=\eta(t)^{-2}dt^{2}+\gamma_{t}
$$ 
near $ \Sigma $, where $ t=u $, $ \eta(t)=|\nabla u| $ and $ \gamma_{t} $ denotes the induced metric on $ \Sigma_{t} $. Since 
$$ 
\overset{\circ}{\mathbb{II}}=0,\,\,\,\,\,   H=\frac{2(2\omega(t)-k)}{(1-t)\omega(t)}\eta(t), 
$$ 
we get 
$$
\partial_{t}\gamma_{t}=2\eta(t)^{-1}\mathbb{II}_{t}=\eta(t)^{-1}H\gamma_{t}=\frac{2(2\omega(t)-k)}{(1-t)\omega(t)}\gamma_{t}.
$$
Thus, $ (1-t)^{2}\omega(t)^{2}\gamma_{t}= $ a fixed metric. Since $ |\nabla u|^{\prime}=-H $ and $ \eta(t) $ satisfies 
$$
\eta^{\prime}(t)=-\frac{2(2\omega(t)-k)}{(1-t)\omega(t)}\eta(t),
$$
one has 
$$
|\nabla u|=\eta(t)=a(1-u)^{2}\omega(u)^{2}.
$$
Notice that
$$
|\nabla u|=c_{\Sigma}|x|^{-2}+O(|x|^{-2-\tau}),
$$
it follows
$$
\mathfrak{c}_{\Sigma}=\lim\limits_{|x|\to \infty}|x|^{2}|\nabla u|=a\lim\limits_{|x|\to \infty}|x|^{2}(1-u)^{2}\omega(u)^{2}=ak^{2}\mathfrak{c}_{\Sigma}^{2}.
$$
Thus $ a=(k^{2}\mathfrak{c}_{\Sigma})^{-1} $ and we have
\begin{equation}\label{grad u}
|\nabla u|=\frac{(1-u)^{2}\omega(u)^{2}}{k^{2}\mathfrak{c}_{\Sigma}},\,\,\,\,\,H=\frac{2(2\omega(u)-k)(1-u)\omega(u)}{k^{2}\mathfrak{c}_{\Sigma}}.
\end{equation}
From (\ref{grad u}), we see that $ |\nabla u|\neq 0 $  everywhere. Thus all the level sets of $ u $ are regular and diffeomorphic to each other. Now, up to an isometry, we conclude that $ M=\Sigma \times [0,1)  $, every slice $ {t}\times \Sigma $ is the level set $ \{u=t\} $ and the metric $ g $ can be written as 
$$
g=\frac{(k^{2}\mathfrak{c}_{\Sigma})^{2}}{(1-t)^{4}\omega(t)^{4}}dt^{2}+\frac{1}{(1-t)^{2}\omega(t)^{2}}\sigma_{0}
$$
for some fixed metric $ \sigma_{0} $ on the 2-sphere $ \Sigma $. We claim that 
$$
\sigma_{0}=k^{2}\mathfrak{c}_{\Sigma}^{2}g_{S^{2}},
$$
where $ g_{S^{2}} $ is the standard metric on the unit 2-sphere. In fact, by Stern's Bochnner formula \cite{Stern}, we have 
$$
\frac{\Delta |\nabla u|}{|\nabla u|}=\frac{1}{2}\Big(\frac{|\nabla^{2}u|^{2}}{|\nabla u|^{2}}+R_{g}-R_{\Sigma_{t}}\Big).
$$
Notice that $ R_{g}\equiv 0 $ and 
$$
\frac{|\nabla^{2}u|^{2}}{|\nabla u|^{2}}=|\mathbb{II}|^{2}+2\frac{|\nabla^{\Sigma_{t}}|\nabla u||^{2}}{|\nabla u|^{2}}+H^{2}=\frac{3}{2}H^{2},
$$
since $ |\nabla^{\Sigma_{t}}|\nabla u||\equiv0 $ and $ \overset{\circ}{\mathbb{II}}=0 $. On the other hand, by (\ref{grad u}), we have 
$$
\nabla |\nabla u|=-\frac{2(2\omega-k)(1-u)\omega}{k^{2}\mathfrak{c}_{\Sigma}}\nabla u,\,\,\,\,\,\frac{\Delta |\nabla u|}{|\nabla u|}=\frac{2}{k^{4}\mathfrak{c}_{\Sigma}^{2}}(6\omega^{2}-6k\omega+k^{2})(1-u)^{2}\omega^{2}.
$$
Therefore, we deduce that 
$$
R_{\Sigma_{t}}=\frac{2(1-u)^{2}\omega^{2}}{k^{2}\mathfrak{c}_{\Sigma}^{2}}.
$$
In particular, 
$$ 
R_{\Sigma}=\frac{2}{k^{2}\mathfrak{c}_{\Sigma}^{2}}. 
$$
Since $ \Sigma $ is diffeomorphic to a 2-sphere, we conclude that $
\sigma_{0}=k^{2}\mathfrak{c}_{\Sigma}^{2}g_{S^{2}} $. Consequently, we conclude that $ (M,g) $ is isometric to   
$$
\Big(\Sigma \times [0,1), g=\frac{(k^{2}\mathfrak{c}_{\Sigma})^{2}}{(1-t)^{4}\omega(t)^{4}}dt^{2}+\frac{k^{2}\mathfrak{c}_{\Sigma}^{2}}{(1-t)^{2}\omega(t)^{2}}g_{S^{2}} \Big).
$$
Under a change of variable 
$$
r=\frac{\mathfrak{c}_{\Sigma}}{k}\frac{\omega(t)}{1-t}\in [\frac{\mathfrak{c}_{\Sigma}}{k}, +\infty), 
$$
one gets that $ (M,g) $ is isometric to 
$$
\Big(\mathbb{R}^{3}\setminus B_{r_{0}}(0),(1+\frac{m_{0}}{2|x|})^{4}g_{\mathbb{R}^{3}} \Big),
$$
where  $ r_{0}=\frac{\mathfrak{c}_{\Sigma}}{k} $, $ m_{0}=2r_{0}(k-1)=\frac{2(k-1)\mathfrak{c}_{\Sigma}}{k} $.
\end{proof}
To prove Theorem~\ref{th 1.2}, we need the following Hawking mass estimate.
\begin{lemma}\label{mass estimate}
	Let $ u $ be the solution of (\ref{u}), we have the following
\begin{equation}\label{lim}
	\lim\limits_{t\to 1}\frac{1}{1-t}\Big(1-\frac{1}{16\pi}\int_{\Sigma_{t}}H^{2}\Big)\leq 2\mathfrak{m}\mathfrak{c}_{\Sigma}^{-1}.
\end{equation}
\end{lemma}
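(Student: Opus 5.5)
The plan is to rewrite $1-\frac{1}{16\pi}\int_{\Sigma_t}H^2$ through the Gauss equation and Gauss--Bonnet as a flux of the Einstein tensor. That flux is then identified, at the order $(1-t)$, with the Ricci (Herzlich / Miao--Tam) form of the ADM mass. The inequality, rather than an equality, comes from discarding a nonnegative traceless second fundamental form term.

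\textbf{Step 1 (Gauss--Bonnet).} For $t$ close to $1$, $u=1-\mathfrak{c}_{\Sigma}|x|^{-1}+O_2(|x|^{-1-\tau})$ shows that $t$ is a regular value. It also shows that $\Sigma_t$ is a connected sphere, a small graph over the coordinate sphere $S_r$ with $r=\mathfrak{c}_{\Sigma}(1-t)^{-1}(1+O(r^{-\tau}))$, so $\chi(\Sigma_t)=2$. The traced Gauss equation gives $R_{\Sigma_t}=R_g-2\mathrm{Ric}(\nu,\nu)+\frac12H^2-|\overset{\circ}{\mathbb{II}}|^2$. Integrating over $\Sigma_t$ and using $\int_{\Sigma_t}R_{\Sigma_t}=8\pi$, I get
$$
1-\frac{1}{16\pi}\int_{\Sigma_t}H^2=-\frac{1}{4\pi}\int_{\Sigma_t}G(\nu,\nu)-\frac{1}{8\pi}\int_{\Sigma_t}|\overset{\circ}{\mathbb{II}}|^2\le -\frac{1}{4\pi}\int_{\Sigma_t}G(\nu,\nu),
$$
where $G=\mathrm{Ric}-\frac12R_g g$. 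By Lemma~\ref{funda u}(ii) the dropped term is $O(r^{-2\tau})=o(1-t)$ anyway, since $\tau>\frac12$. Using $\frac{1}{1-t}=\frac{r}{\mathfrak{c}_{\Sigma}}(1+O(r^{-\tau}))$ and $\int_{\Sigma_t}G(\nu,\nu)=O(r^{-1-\tau})$, it therefore suffices to show
$$
\lim_{t\to1}\Big(-\frac{1}{8\pi}\int_{\Sigma_t}G(X,\nu)\Big)=\mathfrak{m},\qquad X=x^i\partial_i .
$$
Here I have replaced $r\,G(\nu,\nu)$ by $G(X,\nu)$. Because $\nu=X/|X|+O(r^{-\tau})$ and $G=O(r^{-2-\tau})$, this costs $O(r\cdot r^{-2-\tau}\cdot r^{-\tau}\cdot r^{2})=O(r^{1-2\tau})\to0$.

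\textbf{Step 2 (transfer to coordinate spheres).} The contracted Bianchi identity gives $\mathrm{div}\,G=0$, hence $\mathrm{div}(G(X,\cdot))=\langle G,\nabla X\rangle$. Since $\nabla X=g+O(r^{-\tau})$, this equals $-\frac12R_g+O(r^{-2-2\tau})$, which is integrable because $R_g\in L^1$ and $\tau>\frac12$. Applying the divergence theorem on the region between $\Sigma_t$ and a coordinate sphere $S_{r}$ of comparable radius, the two fluxes differ by the integral of an integrable function over a set escaping to infinity, hence by $o(1)$. The limit along $S_r$ is the known Ricci formula for the ADM mass, $\mathfrak{m}=-\frac{1}{8\pi}\lim_{r\to\infty}\int_{S_r}G(X,\nu_e)$, which is valid for $\tau>\frac12$ with $R_g$ integrable (Herzlich; Miao--Tam). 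Again $\nu$ versus $\nu_e$ and the area element changes cost only $O(r^{1-2\tau})$. Combining the two steps gives the claimed $\le 2\mathfrak{m}\mathfrak{c}_{\Sigma}^{-1}$, read as a $\limsup$ if the limit is not known to exist.

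\textbf{Main obstacle.} The delicate point is Step 2: controlling all error terms with only $\tau>\frac12$, and justifying the Ricci-form mass formula in that generality. If citing it is not acceptable, I would derive it by comparing with the standard ADM integrand, expanding $G$ to linear order in $h=g-\delta$. The quadratic errors are $O(r^{-2-2\tau})$ pointwise, and after multiplying by $r\cdot r^2$ they vanish in the limit precisely because $2\tau>1$.
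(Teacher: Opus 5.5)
Your argument is correct, but it does not follow the paper's route. The paper does no computation here. It quotes the Hawking-mass estimate of Agostiniani--Mazzieri--Oronzio for level sets of the $p$-capacitary potential,
\[
\lim_{t\to\infty}\tfrac{t}{4}\Bigl(16\pi-\int_{\{u=\alpha_p(t)\}}H^2\Bigr)\le 8\pi\mathfrak{m}.
\]
It then takes $p=2$, so that $\alpha_2(t)=1-\mathfrak{c}_\Sigma/t$, and substitutes $s=1-\mathfrak{c}_\Sigma/t$.

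You instead prove the estimate directly:
\begin{itemize}
\item the Gauss equation and Gauss--Bonnet turn $1-\frac{1}{16\pi}\int_{\Sigma_t}H^2$ into an Einstein-tensor flux minus $\frac{1}{8\pi}\int_{\Sigma_t}|\overset{\circ}{\mathbb{II}}|^2$;
\item the contracted Bianchi identity moves the flux to coordinate spheres, at the cost of an integral of $-\frac12R_g$ plus an $O(r^{-2-2\tau})$ term, which is integrable;
\item the Ricci form of the ADM mass (Miao--Tam, Herzlich) finishes.
\end{itemize}

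The citation gives brevity and covers $p$-harmonic potentials under the hypotheses of the cited work. Your route is self-contained and uses exactly the paper's standing assumptions: $\tau>\frac12$, decay of $\partial\partial g$, $R_g\in L^1$, and the $O_2$ expansion of $u$. It also proves more. You show that the discarded $\overset{\circ}{\mathbb{II}}$ term is $o(1-t)$ and that the flux converges. Hence the limit exists and equals $2\mathfrak{m}\mathfrak{c}_\Sigma^{-1}$, and your hedge about reading it as a $\limsup$ is unnecessary.

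One small slip: $\int_{\Sigma_t}G(\nu,\nu)$ is not $O(r^{-1-\tau})$; in Schwarzschild it is approximately $-8\pi\mathfrak{m}/r$. The pointwise bound only gives $O(r^{-\tau})$. That is still enough, because the error from replacing $\frac{1}{1-t}$ by $r/\mathfrak{c}_\Sigma$ is then $O(r^{1-2\tau})\to 0$, so nothing breaks.
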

\begin{proof}
By the work in \cite{Ag2}, the Hawking mass estimate 
$$
\lim\limits_{t\to \infty}\frac{t}{4}\Big(16\pi-\int\limits_{\{u=\alpha_{p}(t)\}}H^{2}\Big)\leq 8\pi \mathfrak{m}
$$
was proved, where $ \alpha_{p}(t)=1-(\frac{t_{p}}{t})^{\frac{3-p}{p-1}} $, $ t_{p}=\Big(\frac{p-1}{3-p}c_{p}\Big)^{\frac{p-1}{3-p}} $, $ c_{p} $ is the $ p $ capacity of $ \Sigma $. In our setting, take $ p=2 $, then $ \alpha_{2}(t)=1-\frac{\mathfrak{c}_{\Sigma}}{t} $. Now, consider a change of variable $ s=1-\frac{\mathfrak{c}_{\Sigma}}{t} $, which implies (\ref{lim}). 
\end{proof}
Combining Proposition~\ref{pro} with Lemma~\ref{mass estimate}, we can prove the monotonicity and geometric inequalities of $ \mathcal{A}_{k}(t) $ and $ \mathcal{B}_{k}(t) $.

\textbf{Proof of Theorem 1.1}.
First we prove $ (i) $. At a regular value $ t $, we compute
\begin{equation}\label{A pri}
\begin{aligned}
\mathcal{A}_{k}^{\prime}(t)=&\frac{k}{(1-t)^{2}(4\omega(t)-k)}\Big[g(t)\frac{4\omega(t)-k}{k}\pi+g^{\prime}(t)\frac{(1-t)(4\omega(t)-k)}{k}\pi\\
&-\frac{6(2\omega(t)-k)}{(1-t)(4\omega(t)-k)}\int_{\Sigma_{t}}H|\nabla u|-\Big(\int_{\Sigma_{t}}H|\nabla u|\Big)^{\prime} \Big].
\end{aligned}
\end{equation}
By (\ref{de leq}), we get
\begin{equation}
\begin{aligned}
\mathcal{A}_{k}^{\prime}(t)\geq& \frac{k}{(1-t)^{2}(4\omega(t)-k)}\Big[g(t)\frac{4\omega(t)-k}{k}\pi+g^{\prime}(t)\frac{(1-t)(4\omega(t)-k)}{k}\pi\\
&-\frac{6t}{(1-t)(2t+1)}\int_{\Sigma_{t}}H|\nabla u|-4\pi+\frac{3}{4}\int_{\Sigma_{t}}H^{2} \Big]\\
=&\frac{k}{(1-t)^{2}(4\omega(t)-k)}\Big[g(t)\frac{4\omega(t)-k}{k}\pi+g^{\prime}(t)\frac{(1-t)(4\omega(t)-k)}{k}\pi-4\pi\\
&+\frac{3}{4}\int_{\Sigma_{t}}\Big(H-\frac{2(2\omega-k)}{(1-u)\omega}|\nabla u|\Big)^{2}+\frac{3(2\omega(t)-k)^{2}}{(1-t)\omega(t)(4\omega(t)-k)}\int_{\Sigma_{t}}H|\nabla u|\\
&-\frac{3(2\omega(t)-k)^{2}}{(1-t)^{2}\omega(t)^{2}}\int_{\Sigma_{t}}|\nabla u|^{2}\Big].\\
\end{aligned}
\end{equation}
Since $ 0<k<4 $, we have $ \omega(t)>0 $ and $ 4\omega(t)-k>0 $ for $ t\in [0,1) $. From (\ref{t cp}), we deduce that 
\begin{equation}\label{t cp1}
\frac{3(2\omega(t)-k)^{2}}{(1-t)\omega(t)(4\omega(t)-k)}\int_{\Sigma_{t}}H|\nabla u|
-\frac{3(2\omega(t)-k)^{2}}{(1-t)^{2}\omega(t)^{2}}\int_{\Sigma_{t}}|\nabla u|^{2}+\frac{12\pi(2\omega(t)-k)^{2}}{k(4\omega(t)-k)}\geq 0.
\end{equation}
Moreover, by the definition of $ g(t) $, we get 
$$
g(t)=\frac{4\omega(t)-k}{k}-\frac{k}{4\omega(t)-k},\,\,\,\,\,g^{\prime}(t)=\frac{4(k-1)}{k}+\frac{4k(k-1)}{(4\omega(t)-k)^{2}}.
$$
This gives that
\begin{equation}\label{g t}
g(t)+g^{\prime}(t)(1-t)=3-\frac{8k\omega(t)-5k^{2}}{(4\omega(t)-k)^{2}}.
\end{equation}
Hence, it follows from (\ref{A pri}), (\ref{t cp1}) and (\ref{g t}) that 
$$
\begin{aligned}
\mathcal{A}_{k}^{\prime}(t)&\geq\frac{\pi}{(1-t)^{2}}\Big[g(t)+g^{\prime}(t)(1-t)-\frac{4}{4\omega(t)-k}\Big(k+\frac{3(2\omega(t)-k)^{2}}{4\omega(t)-k}\Big) \Big]\\
&=\frac{\pi}{(1-t)^{2}}\Big[g(t)+g^{\prime}(t)(1-t)-3+\frac{8k\omega(t)-5k^{2}}{(4\omega(t)-k)^{2}}\Big]\\
&=0.
\end{aligned}
$$
If $ u $ has critical values, we may apply a regularization argument to show that
$$
\mathcal{A}_{k}(t_{2})-\mathcal{A}_{k}(t_{1})\geq0
$$
for $ t_{2}>t_{1} $. In fact, we can take $$ f(u)=\frac{k}{(1-u)^{2}(4\omega(u)-k)} $$ in Lemma~\ref{f(u)2} and use (\ref{hess}) to find that
$$
\begin{aligned}
&-\frac{k}{(1-t_{2})^{2}(4\omega(t_{2})-k)}\int_{\Sigma_{t_{2}}}H|\nabla u|+\frac{k}{(1-t_{1})^{2}(4\omega(t_{1})-k)}\int_{\Sigma_{t_{1}}}H|\nabla u|\\
&\geq\int_{t_{1}}^{t_{2}}\frac{k}{(1-t)^{2}(4\omega(t)-k)}\Big[-\frac{6(2\omega(t)-k)}{(1-t)(4\omega(t)-k)}\int_{\Sigma_{t}}|\nabla u|H+\frac{1}{2}\int_{\Sigma_{t}}\Big(\frac{|\nabla^{2}u|^{2}}{|\nabla u|^{2}}+R_{g}\Big)-2\pi\chi(\Sigma_{t}) \Big]dt\\
&=\int_{t_{1}}^{t_{2}}\frac{k}{(1-t)^{2}(4\omega(t)-k)}\Big[-\frac{6(2\omega(t)-k)}{(1-t)(4\omega(t)-k)}\int_{\Sigma_{t}}|\nabla u|H\\
&\,\,\,\,\,\,\,\,\,\,\,\,\,\,\,\,\,\,\,\,\,\,\,\,\,\,\,\,\,\,\,\,\,\,\,\,\,\,\,+\frac{1}{2}\int_{\Sigma_{t}}\Big(|\overset{\circ}{\mathbb{II}}|^{2}+2\frac{|\nabla^{\Sigma_{t}}|\nabla u||^{2}}{|\nabla u|^{2}}+\frac{3}{2}H^{2}+R_{g}\Big)-2\pi\chi(\Sigma_{t}) \Big]dt\\
&\geq \int_{t_{1}}^{t_{2}}\frac{k}{(1-t)^{2}(4\omega(t)-k)}\Big[-\frac{6(2\omega(t)-k)}{(1-t)(4\omega(t)-k)}\int_{\Sigma_{t}}|\nabla u|H+\frac{3}{4}\int_{\Sigma_{t}}H^{2}-4\pi\Big]dt.
\end{aligned}
$$
Hence, we have
$$
\begin{aligned}
\mathcal{A}_{k}(t_{2})-\mathcal{A}_{k}(t_{1})\geq& \int_{t_{1}}^{t_{2}}\frac{k}{(1-t)^{2}(4\omega(t)-k)}\Big[g(t)\frac{4\omega(t)-k}{k}\pi+g^{\prime}(t)\frac{(1-t)(4\omega(t)-k)}{k}\pi\\
&-\frac{6(2\omega(t)-k)}{(1-t)(4\omega(t)-k)}\int_{\Sigma_{t}}|\nabla u|H-4\pi+\frac{3}{4}\int_{\Sigma_{t}}H^{2} \Big]dt\\
=&\frac{k}{(1-t)^{2}(4\omega(t)-k)}\Big[g(t)\frac{4\omega(t)-k}{k}\pi+g^{\prime}(t)\frac{(1-t)(4\omega(t)-k)}{k}\pi-4\pi\\
&+\frac{3}{4}\int_{\Sigma_{t}}\Big(H-\frac{2(2\omega-k)}{(1-u)\omega}|\nabla u|\Big)^{2}+\frac{3(2\omega(t)-k)^{2}}{(1-t)\omega(t)(4\omega(t)-k)}\int_{\Sigma_{t}}H|\nabla u|\\
&-\frac{3(2\omega(t)-k)^{2}}{(1-t)^{2}\omega(t)^{2}}\int_{\Sigma_{t}}|\nabla u|^{2}\Big]\\
\geq&0.
\end{aligned}
$$

To prove (\ref{lim A}), by (\ref{lim}),
$$
\lim\limits_{t\to 1}\frac{1}{1-t}\Big(1-\frac{1}{16\pi}\int_{\Sigma_{t}}H^{2}\Big)\leq 2\mathfrak{m}\mathfrak{c}_{\Sigma}^{-1}.
$$
Observe that 
\begin{equation}\label{will 1}
\begin{aligned}
\Big(\int_{\Sigma_{t}}H|\nabla u| \Big)^{\prime}&\leq 4\pi-\frac{3}{4}\int_{\Sigma_{t}}H^{2}\\
&=-8\pi+12\pi\Big(1-\frac{1}{16\pi}\int_{\Sigma_{t}}H^{2}\Big)\\
&\leq 24\pi \mathfrak{m}\mathfrak{c}_{\Sigma}^{-1}(1-t)-8\pi.
\end{aligned}
\end{equation}
Since $ \lim\limits_{t\to 1}\int_{\Sigma_{t}}H|\nabla u|=0 $, integrating (\ref{will 1}) from $ t $ to $ 1 $ gives
\begin{equation}\label{will 2}
-\int_{\Sigma_{t}}H|\nabla u|\leq -8\pi(1-t)+12\pi \mathfrak{m}\mathfrak{c}_{\Sigma}^{-1}(1-t)^{2}.
\end{equation}
This implies that 
$$
-\frac{k}{(1-t)(4\omega(t)-k)}\int_{\Sigma_{t}}H|\nabla u|\leq -\frac{8 \pi k}{4\omega(t)-k}+\frac{12\pi k \mathfrak{m}\mathfrak{c}_{\Sigma}^{-1}(1-t)}{4\omega(t)-k}.
$$
On the other hand, 
$$
\lim\limits_{t\to 1}\Big[g(t)\pi-\frac{8 \pi k}{4\omega(t)-k}+\frac{12\pi k \mathfrak{m}\mathfrak{c}_{\Sigma}^{-1}(1-t)}{4\omega(t)-k} \Big]=0.
$$
By L'Hospital rule, 
$$
\begin{aligned}
\lim\limits_{t\to 1}\mathcal{A}_{k}(t)&\leq \lim\limits_{t\to 1}\frac{1}{1-t}\Big[g(t)\pi-\frac{8 \pi k}{4\omega(t)-k}+\frac{12\pi k \mathfrak{m}\mathfrak{c}_{\Sigma}^{-1}(1-t)}{4\omega(t)-k} \Big]\\
&=\lim\limits_{t\to 1}\frac{\pi}{-1}\Big[\frac{4(k-1)\pi}{k}+\frac{4k(k-1)\pi}{(4\omega(t)-k)^{2}}+\frac{32\pi k(k-1)}{(4\omega(t)-k)^{2}}-\frac{12\pi k\mathfrak{m}\mathfrak{c}_{\Sigma}^{-1} }{4\omega(t)-k}\Big]\\
&=4\pi(\mathfrak{m}\mathfrak{c}_{\Sigma}^{-1}-\frac{2(k-1)}{k}).
\end{aligned}
$$
To prove $(ii)$,  we calculate, at a regular value $ t $,
$$
\begin{aligned}
\mathcal{B}_{k}^{\prime}(t)&=\frac{k^{2}}{(1-t)^{2}}\Big[\frac{4\pi}{k}-\frac{4\omega(t)-k}{(1-t)^{2}\omega(t)^{2}}\int_{\Sigma_{t}}|\nabla u|^{2}+\frac{1}{(1-t)\omega(t)}\int_{\Sigma_{t}}|\nabla u|H \Big]\geq0.
\end{aligned}
$$
where we have used (\ref{t cp}).
For the general case, we can again apply a regularization argument to show that 
$$
\mathcal{B}_{k}(t_{2})-\mathcal{B}_{k}(t_{1})\geq0
$$
for $ t_{2}>t_{1} $. Indeed, we can take $ f(u)=-\frac{k^{2}}{(1-u)^{3}\omega(u)} $ in Lemma~\ref{f(u)1} to obtain that
$$
\begin{aligned}
\mathcal{B}_{k}(t_{2})-\mathcal{B}_{k}(t_{1})&=\int_{t_{1}}^{t_{2}}\frac{4\pi k}{(1-t)^{2}}dt-\frac{k^{2}}{(1-t_{2})^{3}\omega(t_{2})}\int_{\Sigma_{t_{2}}}|\nabla u|^{2}+\frac{1}{(1-t_{1})^{3}\omega(t_{1})}\int_{\Sigma_{t_{1}}}|\nabla u|^{2}\\
&=\int_{t_{1}}^{t_{2}}\Big[\frac{4\pi k}{(1-t)^{2}}-\frac{k^{2}(4\omega(t)-k)}{(1-t)^{4}\omega(t)^{2}}\int_{\Sigma_{t}}|\nabla u|^{2}+\frac{k^{2}}{(1-t)^{3}\omega(t)}\int_{\Sigma_{t}}|\nabla u|H\Big]dt\\
&\geq0.
\end{aligned}
$$
To prove (\ref{lim B}), use (\ref{will 2}) to obtain
\begin{equation}\label{will 3}
\Big(\int_{\Sigma_{t}}|\nabla u|^{2}\Big)^{\prime}=-\int_{\Sigma_{t}}H|\nabla u|\leq -8\pi(1-t)+12\pi \mathfrak{m}\mathfrak{c}_{\Sigma}^{-1}(1-t)^{2}.
\end{equation}
Integrating (\ref{will 3}) from $ t $ to $ 1 $ gives
$$
-\int_{\Sigma_{t}}|\nabla u|^{2}\leq -4\pi(1-t)^{2}+4\pi \mathfrak{m}\mathfrak{c}_{\Sigma}^{-1}(1-t)^{3}.
$$
This implies that 
$$
-\frac{k^{2}}{(1-t)^{2}\omega(t)^{2}}\int_{\Sigma_{t}}|\nabla u|^{2}\leq -\frac{4\pi k^{2}}{\omega(t)^{2}}+\frac{4\pi k^{2} \mathfrak{m}\mathfrak{c}_{\Sigma}^{-1}(1-t)}{\omega(t)^{2}}.
$$
Using L'Hospital rule again, we have
$$
\begin{aligned}
\lim\limits_{t\to 1}\mathcal{B}_{k}(t)&\leq \lim\limits_{t\to 1}\frac{\omega(t)}{1-t}\Big[4\pi-\frac{4\pi k^{2}}{\omega(t)^{2}}+\frac{4\pi k^{2} \mathfrak{m}\mathfrak{c}_{\Sigma}^{-1}(1-t)}{\omega(t)^{2}} \Big]\\
&=\lim\limits_{t\to 1}\frac{k\pi}{-1}\Big[\frac{8\pi k^{2}(k-1)}{\omega(t)^{3}}-\frac{4\pi k^{2}\mathfrak{m}\mathfrak{c}_{\Sigma}^{-1}}{\omega(t)^{2}} \Big]\\
&=4\pi k(\mathfrak{m}\mathfrak{c}_{\Sigma}^{-1}-\frac{2(k-1)}{k}).
\end{aligned}
$$
$\hfill\square$
\begin{remark}
Comparing (\ref{0 cp}), (\ref{lim A0}) and (\ref{lim B0}), one has (\ref{0 cp})$+$(\ref{lim B0}) $\Rightarrow$ (\ref{lim A0}).
\end{remark}
The following monotone quantitity $ \mathcal{F}_{k}(t) $ was investigated in \cite{Ag2,Ag3} for p-harmonic function with $ p\in (1,3) $ and $ k=1 $. In \cite{Oronzio}, Oronzio considered $ \mathcal{F}_{k}(t) $ for harmonic functions with $ k=2 $. In \cite{Xia}, Xia-Yin-Zhou derived $ \mathcal{F}_{k}(t) $ for p-harmonic functions with $ p\in (1,3) $ and $ k\in (0,2] $. Here, we give a different proof of the monotonicity of $ \mathcal{F}_{k}(t) $ for harmonic functions with $ k>0 $ via using the monotone quantity $ \mathcal{D}_{k}(t) $ in Proposition~\ref{D mono}. In addition, the estimate of $ \lim\limits_{t\to 1}\mathcal{F}_{k}(t) $ is inspired by the idea in \cite{Ag2}, which is  slightly different from the methods in \cite{Oronzio} and \cite{Xia}.
\begin{proposition}\label{F mono}
Let $ (M,g) $ be a complete, orientable, one-ended asymptotically flat 3-manifold with compact, connnected boundary $ \Sigma $. Suppose that $ H_{2}(M,\Sigma)=0 $. Let $ u $ be the solution of (\ref{u}). If $ R_{g}\geq 0 $ and $ k>0 $, then the following quantity 
\begin{equation}\label{F}
\mathcal{F}_{k}(t)=\frac{4\pi\omega(t)}{k(1-t)}+\frac{4\omega(t)-3k}{(1-t)^{3}\omega(t)}\int_{\Sigma_{t}}|\nabla u|^{2}-\frac{1}{(1-t)^{2}}\int_{\Sigma_{t}}|\nabla u|H
\end{equation}
is monotone nondecreasing in $ t $. Furthermore, we have 
\begin{equation}\label{lim F}
\mathcal{F}_{k}(t)\leq 8\pi(\mathfrak{m}\mathfrak{c}_{\Sigma}^{-1}-\frac{2(k-1)}{k}).
\end{equation}
In particular, at $ \Sigma $,
\begin{equation}\label{F 0}
\frac{4\pi}{k}+(4-3k)\int_{\Sigma}|\nabla u|^{2}-\int_{\Sigma}|\nabla u|H\leq 8\pi(\mathfrak{m}\mathfrak{c}_{\Sigma}^{-1}-\frac{2(k-1)}{k}),
\end{equation}
and equality holds if and only if  $ (M,g) $ is isometric to $ (M_{m,r_{0}},g_{m}) $ of mass $ \mathfrak{m}=2r_{0}(k-1) $ with $ r_{0}=\frac{\mathfrak{c}_{\Sigma}}{k} $.
\end{proposition}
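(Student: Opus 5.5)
The plan is to follow the same scheme as for $\mathcal{A}_{k}$ and $\mathcal{B}_{k}$ in the proof of Theorem~\ref{th 1.1}: differentiate at regular values, then pass to the general case by regularization. Write $A(t)=\int_{\Sigma_{t}}|\nabla u|^{2}$ and $B(t)=\int_{\Sigma_{t}}|\nabla u|H$, so that $A'=-B$ by Lemma~\ref{deriva}.

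\textbf{Monotonicity at regular values.} Differentiating (\ref{F}) gives a combination of $A$, $B$ and $B'$, and $B'$ enters with the coefficient $-(1-t)^{-2}<0$. So I will use (\ref{de leq}), $B'\leq 4\pi-\frac34\int_{\Sigma_{t}}H^{2}$, and then complete the square exactly as in Lemma~\ref{D mono}:
$$\tfrac34\int_{\Sigma_t}H^2=\tfrac34\int_{\Sigma_t}\Big(H-\tfrac{2(2\omega-k)}{(1-u)\omega}|\nabla u|\Big)^2+\tfrac{3(2\omega-k)}{(1-t)\omega}B-\tfrac{3(2\omega-k)^2}{(1-t)^2\omega^2}A .$$
Substituting this, the square term is nonnegative. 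I expect the remaining terms in $\pi$, $A$ and $B$ to collapse to a positive multiple (something like $\frac{c\,\omega}{(1-t)^{2}}$) of the left side minus the right side of (\ref{t cp}). Proposition~\ref{pro} then gives $\mathcal{F}_{k}'\geq0$. The consistency check is that on the model space both (\ref{t cp}) and the square are equalities, so $\mathcal{F}_{k}$ is constant there.

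\textbf{Regularization.} For critical values of $u$, I will repeat the argument with Lemma~\ref{f(u)2}, taking $f(u)=(1-u)^{-2}$ for the $B$-term, and Lemma~\ref{f(u)1}, taking $f(u)=\frac{4\omega-3k}{(1-u)^{3}\omega}$ for the $A$-term. This yields $\mathcal{F}_{k}(t_{2})-\mathcal{F}_{k}(t_{1})\geq\int_{t_{1}}^{t_{2}}(\text{the same nonnegative integrand})\,dt$, verbatim as in the $\mathcal{A}_{k}$ case. The decomposition (\ref{hess}), the connectedness of $\Sigma_{t}$ (from $H_{2}(M,\Sigma)=0$) and Gauss--Bonnet supply the $4\pi$.

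\textbf{Limit bound (\ref{lim F}).} Since $\mathcal{F}_{k}$ is monotone, it suffices to bound $\limsup_{t\to1}\mathcal{F}_{k}(t)$. Write $\mathcal{F}_{k}(t)=X(t)/(1-t)$ with
$$X(t)=\frac{4\pi\omega}{k}+\frac{4\omega-3k}{(1-t)^{2}\omega}A-\frac{1}{1-t}B .$$
By Lemma~\ref{funda u}(iv), $X\to 4\pi+4\pi-8\pi=0$, so the limit reduces to controlling the $O(1-t)$ corrections to $A/(1-t)^{2}$ and $B/(1-t)$. For the $B$-term I will use (\ref{will 2}). For the $A$-term the coefficient $(4\omega-3k)/\omega\to 1$ is positive, so I also need an \emph{upper} bound on $A$. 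I will try to get it by integrating $A'=-B$ together with a lower bound on $B$ coming from the monotonicity of $\mathcal{B}_{k}$ or $\mathcal{D}_{k}$. Failing that, I would rewrite $\mathcal{F}_{k}$ as a combination of $\mathcal{A}_{k}$-type and $\mathcal{B}_{k}$-type quantities, so that only the upper bounds (\ref{lim A}) and (\ref{lim B}) are needed. After that, L'Hospital with the Hawking-mass estimate of Lemma~\ref{mass estimate} should give $8\pi(\mathfrak{m}\mathfrak{c}_{\Sigma}^{-1}-\frac{2(k-1)}{k})$. This limit computation is the step I expect to be the main obstacle, because the sign of the $A$-term works against the available one-sided estimates.

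\textbf{Conclusion and rigidity.} Evaluating at $t=0$ (where $\omega=1$) gives (\ref{F 0}). In the case of equality, the chain forces equality in (\ref{t cp}) for every regular $t$, in particular at $t=0$, which is (\ref{0 cp}). The rigidity part of Proposition~\ref{pro} then identifies $(M,g)$ with $(M_{m,r_{0}},g_{m})$. Conversely, a direct computation on the model using (\ref{grad u}) shows equality holds there.
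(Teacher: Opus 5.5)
\textbf{The gap is the limit bound (\ref{lim F}), and with it (\ref{F 0}).} Neither of your two routes works as stated.

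Integrating $A'=-B$ from $t$ to $1$ against a \emph{lower} bound on $B$ yields a \emph{lower} bound on $A$. This is exactly how (\ref{will 3}) comes out of (\ref{will 2}). An upper bound on $A$ by integration would need a second-order \emph{upper} bound on $B$, i.e.\ a reverse Hawking-mass inequality, which nothing available provides. Quantitatively, if you keep (\ref{will 2}) for the $B$-term, you need $A\le 4\pi(1-t)^2-4\pi\mathfrak{m}\mathfrak{c}_{\Sigma}^{-1}(1-t)^3+o((1-t)^3)$. Together with (\ref{will 3}), that is a sharp two-sided second-order expansion of $A$. Lemma~\ref{funda u}(iv) only gives $O((1-t)^{\tau})$ errors, and Lemma~\ref{mass estimate} is one-sided.

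Your fallback fails as well. $B$ occurs only in $\mathcal{A}_k$ and $A$ only in $\mathcal{B}_k$, so the only decomposition is $\mathcal{F}_k=\frac{4\omega-k}{k}\mathcal{A}_k-\frac{4\omega-3k}{k^2}\mathcal{B}_k$. The coefficient of $\mathcal{B}_k$ tends to $-\frac1k<0$, so an upper bound on $\mathcal{B}_k$ points the wrong way.

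\textbf{The missing idea} is to use (\ref{t cp}) pointwise in $t$, not integrated, to eliminate the $B$-term. Multiplying (\ref{t cp}) by $\omega>0$ gives $-\frac{B}{1-t}\le\frac{4\pi\omega}{k}-\frac{4\omega-k}{(1-t)^2\omega}A$. Substituting this into your $X$ gives the exact identity
\[
\mathcal{F}_k(t)=\frac{2}{k}\,\mathcal{B}_k(t)-\frac{\omega(t)^2}{(1-t)^2}\,\mathcal{D}_k(t)\le\frac{2}{k}\,\mathcal{B}_k(t),
\]
in which $A$ now enters with the favorable sign. Monotonicity of $\mathcal{B}_k$ then finishes, together with the bound $\lim_{t\to1}\mathcal{B}_k\le 4\pi k(\mathfrak{m}\mathfrak{c}_{\Sigma}^{-1}-\frac{2(k-1)}{k})$. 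That bound is the one actually derived at the end of the proof of Theorem~\ref{th 1.1}(ii) via (\ref{will 3}) and Lemma~\ref{mass estimate}; the displayed (\ref{lim B}) is misprinted. This yields (\ref{lim F}) for every $t$.

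The paper takes a different route: it applies L'Hospital to $\mathcal{F}_k=N/D$ with $D=\frac{1-t}{\omega}$. This makes $B'$ appear with the positive coefficient $\frac{\omega}{k(1-t)}$, so (\ref{de leq}) applies. After completing the square with coefficient $-\frac12$ and using $k-\omega=(k-1)(1-t)$, Lemma~\ref{funda u}(iv) and Lemma~\ref{mass estimate} finish the argument.

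\textbf{A smaller point on the regular-value step.} Nothing survives besides the square: the $\pi$-, $A$- and $B$-terms cancel exactly. So
$\mathcal{F}_k'=-\frac{\omega^2}{(1-t)^2}\mathcal{D}_k'\ge\frac{3}{4(1-t)^2}\int_{\Sigma_t}\big(H-\frac{2(2\omega-k)}{(1-u)\omega}|\nabla u|\big)^2$,
and Proposition~\ref{pro} is not needed there. Your monotonicity conclusion and your regularization are fine.

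\textbf{Rigidity.} Your argument (``the chain forces equality in (\ref{t cp})'') rests on a term that is not there. It is repaired by the identity above at $t=0$: $\mathcal{F}_k(0)=\frac2k\mathcal{B}_k(0)-\mathcal{D}_k(0)$. Equality in (\ref{F 0}), together with (\ref{lim B0}) and $\mathcal{D}_k(0)\ge0$, forces $\mathcal{D}_k(0)=0$, which is equality in (\ref{0 cp}). Proposition~\ref{pro} then identifies $(M,g)$ with the model.
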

\begin{proof}
Observe that 
$$
\mathcal{F}_{k}(t)=-\frac{\omega(t)^{2}}{(1-t)^{2}}\mathcal{D}_{k}(t)-\frac{2k}{(1-t)^{3}\omega(t)}\int_{\Sigma_{t}}|\nabla u|^{2}+\frac{8\pi\omega(t)}{k(1-t)}.
$$
At a regular value $ t $, we compute 
$$
\begin{aligned}
\mathcal{F}_{k}^{\prime}(t)=&-\frac{\omega(t)^{2}}{(1-t)^{2}}\mathcal{D}_{k}^{\prime}(t)-\frac{2k\omega(t)}{(1-t)^{3}}\mathcal{D}_{k}(t)-\frac{2k(4\omega(t)-k)}{(1-t)^{4}\omega(t)^{2}}\int_{\Sigma_{t}}|\nabla u|^{2}\\
&+\frac{2k}{(1-t)^{3}\omega(t)}\int_{\Sigma_{t}}|\nabla u|H+\frac{8\pi}{(1-t)^{2}}\\
=&-\frac{\omega(t)^{2}}{(1-t)^{2}}\mathcal{D}_{k}^{\prime}(t)\\
\geq&0.
\end{aligned}
$$
In general, observe that the following relation holds
$$
\mathcal{F}_{k}(t)=\frac{4\omega(t)-k}{k}\mathcal{A}_{k}(t)-\frac{4\omega(t)-3k}{k^{2}}\mathcal{B}_{k}(t).
$$
Hence, we can apply a regularization argument to show that 
$$
\mathcal{F}_{k}(t_{2})-\mathcal{F}_{k}(t_{1})\geq0
$$
for $ t_{2}>t_{1} $.

To prove (\ref{lim F}), inspired by the idea in \cite{Ag2}, using the L'Hospital rule, we get
\begin{equation}\label{Llim F}
\begin{aligned}
\lim\limits_{t\to 1}\mathcal{F}_{k}(t)=&\lim\limits_{t\to 1}\frac{\frac{4\pi}{k}+\frac{4\omega(t)-3k}{(1-t)^{2}\omega(t)^{2}}\int_{\Sigma_{t}}|\nabla u|^{2}-\frac{1}{(1-t)\omega(t)}\int_{\Sigma_{t}}|\nabla u|H}{\frac{1-t}{\omega(t)}}\\
=&\lim\limits_{t\to 1}\frac{\omega(t)}{k(1-t)}\Big[-\frac{12\omega(t)^{2}-16k\omega(t)+6k^{2}}{(1-t)^{2}\omega(t)^{2}}\int_{\Sigma_{t}}|\nabla u|^{2}+\frac{6\omega(t)-4k}{(1-t)\omega(t)}\int_{\Sigma_{t}}|\nabla u|H\\
&+\Big(\int_{\Sigma_{t}}|\nabla u|H\Big)^{\prime} \Big]\\
\leq&\lim\limits_{t\to 1}\frac{\omega(t)}{k(1-t)}\Big[-\frac{12\omega(t)^{2}-16k\omega(t)+6k^{2}}{(1-t)^{2}\omega(t)^{2}}\int_{\Sigma_{t}}|\nabla u|^{2}+\frac{6\omega(t)-4k}{(1-t)\omega(t)}\int_{\Sigma_{t}}|\nabla u|H\\
&+4\pi-\frac{3}{4}\int_{\Sigma_{t}}H^{2}\Big]\\
=&\lim\limits_{t\to 1}\frac{\omega(t)}{k(1-t)}\Big[-\int_{\Sigma_{t}}\frac{1}{2}\Big(H-\frac{2(2\omega-k)}{(1-u)\omega}|\nabla u|\Big)^{2}+\frac{2\omega(t)-2k}{(1-t)\omega(t)}\int_{\Sigma_{t}}|\nabla u|H\\
&-\frac{4(k-\omega(t))^{2}}{(1-t)^{2}\omega(t)^{2}}\int_{\Sigma_{t}}|\nabla u|^{2}+4\pi-\frac{1}{4}\int_{\Sigma_{t}}H^{2} \Big].
\end{aligned}
\end{equation}
Since $ k-\omega(t)=(k-1)(1-t) $, it follows from (\ref{Llim F}), Lemma~\ref{funda u}$ (iv) $ and (\ref{lim}) that
$$
\begin{aligned}
\lim\limits_{t\to 1}\mathcal{F}_{k}(t)&\leq \lim\limits_{t\to 1} \frac{\omega(t)}{k(1-t)}\Big[-\frac{2(k-1)}{\omega(t)}\int_{\Sigma_{t}}|\nabla u|H-\frac{4(k-1)^{2}}{\omega(t)^{2}}\int_{\Sigma_{t}}|\nabla u|^{2}+4\pi-\frac{1}{4}\int_{\Sigma_{t}}H^{2}  \Big]\\
&=\lim\limits_{t\to 1}\Big[-\frac{2(k-1)}{k(1-t)}\int_{\Sigma_{t}}|\nabla u|H+\frac{\omega(t)}{k(1-t)}\Big(4\pi-\frac{1}{4}\int_{\Sigma_{t}}H^{2} \Big) \Big]\\
&\leq 8\pi(\mathfrak{m}\mathfrak{c}_{\Sigma}^{-1}-\frac{2(k-1)}{k}).
\end{aligned}
$$
\end{proof}
\begin{remark}
Comparing (\ref{0 cp}), (\ref{lim B0}) and (\ref{F 0}), one has (\ref{0 cp})$+$(\ref{lim B0}) $\Rightarrow$ (\ref{F 0}).
\end{remark}
Now, we apply the monotone quantity $ \mathcal{F}_{k}(t) $ to derive the monotone quantity $\mathcal{G}_{k}(t)$.

\textbf{Proof of Theorem 1.2}.
At a regular value $ t $, we calculate
\begin{equation}\label{G derF}
\begin{aligned}
\mathcal{G}_{k}^{\prime}(t)&=\frac{4\pi}{k\omega(t)^{2}}+\frac{4\omega(t)-3k}{(1-t)^{2}\omega(t)^{4}}\int_{\Sigma_{t}}|\nabla u|^{2}-\frac{1}{(1-t)\omega(t)^{3}}\int_{\Sigma_{t}}|\nabla u|H\\
&=\frac{1-t}{\omega(t)^{3}}\mathcal{F}_{k}(t).
\end{aligned}
\end{equation}
In general, note that 
$$
\mathcal{G}_{k}(t)=-\frac{(1-t)^{2}}{\omega(t)^{2}}\mathcal{B}_{k}(t).
$$
Thus, we can apply a regularization argument to show that 
$$
\mathcal{G}_{k}(t_{2})-\mathcal{G}_{k}(t_{1})=\int_{t_{1}}^{t_{2}}\frac{1-t}{\omega(t)^{3}}\mathcal{F}_{k}(t)dt.
$$
for $ t_{2}>t_{1} $.
By Proposition~\ref{F mono}, $ \mathcal{F}_{k}(t) $ is monotone nondecreasing in $ t $, we get
\begin{equation}\label{Gk deriv}
\mathcal{G}_{k}^{\prime}(t)=\frac{1-t}{\omega(t)^{3}}\mathcal{F}_{k}(t)\geq \frac{1-t}{\omega(t)^{3}}\mathcal{F}_{k}(0).
\end{equation}
Integrating (\ref{Gk deriv}) from $ 0 $ to $ t $ gives 
\begin{equation}\label{G-F}
\mathcal{G}_{k}(t)-\mathcal{G}_{k}(0)\geq \Big[\frac{k-2}{2(k-1)^{2}}+\frac{2\omega(t)-k}{2(k-1)^{2}\omega(t)^{2}}\Big]\mathcal{F}(0).
\end{equation}
On the other hand, by Lemma~\ref{funda u} $ (iv) $, we have 
$$ 
\lim\limits_{t\to 1}\mathcal{G}_{k}(t)=0 
$$. 
Taking $ t \to 1 $ in (\ref{G-F}) gives
\begin{equation}\label{GF1}
-\mathcal{G}_{k}(0)\geq \frac{1}{2k}\mathcal{F}_{k}(0).
\end{equation}
Now, the condition (\ref{H condition0}) implies that 
$$
\int_{\Sigma}H|\nabla u|\leq 4\pi \alpha \mathfrak{c}_{\Sigma}-k^{2}\alpha \mathfrak{c}_{\Sigma}\int_{\Sigma}|\nabla u|^{2}.
$$
Then we get 
\begin{equation}\label{GF2}
\begin{aligned}
\mathcal{F}_{k}(0)&=\frac{4\pi}{k}+(4-3k)\int_{\Sigma}|\nabla u|^{2}-\int_{\Sigma}|\nabla u|H\\
&\geq \frac{4\pi}{k^{2}}(k-k^{2}\alpha \mathfrak{c}_{\Sigma})-(3k-4-k^{2}\alpha \mathfrak{c}_{\Sigma})\int_{\Sigma}|\nabla u|^{2}\\
&\geq \frac{4\pi}{k^{2}}(k-k^{2}\alpha \mathfrak{c}_{\Sigma})-(k-k^{2}\alpha \mathfrak{c}_{\Sigma})\int_{\Sigma}|\nabla u|^{2}\\
&=-k(1-k\alpha\mathfrak{c}_{\Sigma})\mathcal{G}_{k}(0),
\end{aligned}
\end{equation}
where we have used the fact that $ 3k-4\leq k $ for $ 0<k\leq 2 $, and 
$$
\mathcal{G}_{k}(0)=-\frac{4\pi}{k^{2}}+\int_{\Sigma}|\nabla u|^{2}.
$$
It follows from (\ref{GF1}) and (\ref{GF2}) that 
\begin{equation}\label{GF3}
-\frac{1}{2}(1+k\alpha \mathfrak{c}_{\Sigma})\mathcal{G}_{k}(0)\geq 0.
\end{equation}
Since $ \alpha\in (-\frac{1}{k\mathfrak{c}_{\Sigma}},\frac{1}{k\mathfrak{c}_{\Sigma}}]  $, (\ref{GF3}) implies that 
$$
\mathcal{G}_{k}(0)\leq 0,
$$
and so 
$$
\mathcal{F}_{k}(0)\geq 0
$$
by (\ref{GF2}). Hence, from (\ref{Gk deriv}) we conclude that 
$$
\mathcal{G}_{k}^{\prime}(t)\geq 0.
$$
This means that 
$$
\mathcal{G}_{k}(t)\leq \lim\limits_{t\to 1}\mathcal{G}_{k}(t)=0.
$$
If $ \mathcal{G}_{k}(0)=0$, then $ \mathcal{G}_{k}(t)\equiv0 $ on $ [0,1) $. Thus $ \mathcal{F}_{k}(t)\equiv0 $ on $ [0,1) $ by (\ref{G derF}). Now the rigidity case follows from Proposition~\ref{F mono}.
$\hfill\square$

\section{\bf Applications}
In this section, we give some applications in the previous sections. First we obtain the mass-capacity inequality (\ref{willmore}), which extend the work in Hirsch-Miao-Tam \cite{Hrisch1} under the harmonic functions setting. Then, we generalize the boundary mean curvature condition in Oronzio \cite{Oronzio} to obtain the Bray-Miao-type mass-capacity inequalities. 

\begin{proposition}\label{cor1}
Let $ (M,g) $ be a complete, orientable, one-ended asymptotically flat 3-manifold with compact, connnected boundary $ \Sigma $ and with $ R_{g}\geq 0 $. Suppose that $ H_{2}(M,\Sigma)=0 $. Let $ u $ be the solution of (\ref{u}) and let $ 0<k<4 $. Then
	\begin{equation}\label{willmore}
	2-\frac{1}{k}\leq \mathfrak{m}\mathfrak{c}_{\Sigma}^{-1}+\frac{(\sqrt{kW}+\sqrt{kW+4-k})^{2}}{(4-k)^{2}},
	\end{equation}
	where $ W=\frac{1}{16\pi}\int_{\Sigma}H^{2} $, and equality holds if and only if $ (M,g) $ is isometric to $ (M_{m,r_{0}},g_{m}) $ of mass $ \mathfrak{m}=2r_{0}(k-1) $ with $ r_{0}=\frac{\mathfrak{c}_{\Sigma}}{k} $.
\end{proposition}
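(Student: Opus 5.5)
The plan is to combine two boundary inequalities already established: the mass--capacity inequality (\ref{lim B0}) from Theorem~\ref{th 1.1}(ii) and the geometric inequality (\ref{0 cp}) from Proposition~\ref{pro}. The quantity $\int_{\Sigma}|\nabla u|^{2}$ will be eliminated between them using the Cauchy--Schwarz inequality.

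\textbf{Step 1: rewrite (\ref{lim B0}).} Dividing (\ref{lim B0}) by $4\pi k>0$ gives
\[
\mathfrak{m}\mathfrak{c}_{\Sigma}^{-1}\geq 2-\frac{1}{k}-\frac{k}{4\pi}\int_{\Sigma}|\nabla u|^{2},
\]
which is the same as (\ref{eq 5}). So it suffices to show
\[
\frac{k}{4\pi}\int_{\Sigma}|\nabla u|^{2}\leq \frac{(\sqrt{kW}+\sqrt{kW+4-k})^{2}}{(4-k)^{2}}.
\]

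\textbf{Step 2: bound the gradient term via (\ref{0 cp}).} Set $X=\int_{\Sigma}|\nabla u|^{2}$. Applying Cauchy--Schwarz to the right-hand side of (\ref{0 cp}) gives
\[
\int_{\Sigma}H|\nabla u|\leq \Big(\int_{\Sigma}H^{2}\Big)^{1/2}X^{1/2}=4\sqrt{\pi W}\,X^{1/2}.
\]
Hence $k(4-k)X\leq 4\pi+4k\sqrt{\pi W}\,X^{1/2}$.

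Now substitute $y=\sqrt{kX/(4\pi)}\geq 0$ and divide by $4\pi$. This turns the inequality into the quadratic inequality
\[
(4-k)y^{2}-2\sqrt{kW}\,y-1\leq 0.
\]
Since $0<k<4$, the leading coefficient is positive. Therefore $y$ lies below the larger root, that is,
\[
y\leq \frac{\sqrt{kW}+\sqrt{kW+4-k}}{4-k}.
\]
Squaring gives exactly the bound required in Step 1, and (\ref{willmore}) follows.

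\textbf{Step 3: equality case.} This is the only delicate point. If equality holds in (\ref{willmore}), then every inequality in the chain must be an equality. In particular (\ref{lim B0}) is an equality, and its rigidity statement in Theorem~\ref{th 1.1}(ii) gives that $(M,g)$ is isometric to $(M_{m,r_{0}},g_{m})$ with $r_{0}=\mathfrak{c}_{\Sigma}/k$ and $\mathfrak{m}=2r_{0}(k-1)$.

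For the converse, I would check directly on the model using the explicit formulas (\ref{grad u}) at $u=0$. These give $|\nabla u|=1/(k^{2}\mathfrak{c}_{\Sigma})$ and $H=2(2-k)/(k^{2}\mathfrak{c}_{\Sigma})$, and the area is $|\Sigma|=4\pi k^{2}\mathfrak{c}_{\Sigma}^{2}$. Since $H$ and $|\nabla u|$ are constant on $\Sigma$, $H$ is a multiple of $|\nabla u|$, so Cauchy--Schwarz is an equality. Equality in (\ref{0 cp}) and in (\ref{lim B0}) on the model is part of their stated rigidity. One caveat is that Cauchy--Schwarz needs $H\geq 0$ to be sharp, which requires $k\leq 2$. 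So for $2<k<4$ I would recheck the model computation: the value $W=(2-k)^{2}/(4k^{2})$ should still make the quadratic in Step 2 sharp, and I expect it does, though this needs to be verified.
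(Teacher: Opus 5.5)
The inequality part of your proposal is exactly the paper's argument: Cauchy--Schwarz on the right side of (\ref{0 cp}), solving the quadratic in $\big(\int_{\Sigma}|\nabla u|^{2}\big)^{1/2}$, and inserting the bound into (\ref{lim B0}). Your algebra (the substitution $y=\sqrt{kX/(4\pi)}$ and the larger root $(\sqrt{kW}+\sqrt{kW+4-k})/(4-k)$) is correct.

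For the forward half of the equality case you use the rigidity of (\ref{lim B0}) from Theorem~\ref{th 1.1}(ii). The paper instead uses the fact that reaching the larger root forces equality in (\ref{0 cp}), and then applies Proposition~\ref{pro}. That route is slightly safer, because Proposition~\ref{pro} is the rigidity statement whose proof the paper actually writes out. Either way you get the model.

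The converse is where your Step 3 stalls, and your expectation there is wrong. In fact the converse is false for $2<k<4$. From (\ref{grad u}) at $u=0$ you correctly get $|\nabla u|=1/(k^{2}\mathfrak{c}_{\Sigma})$, $H=2(2-k)/(k^{2}\mathfrak{c}_{\Sigma})$ and $|\Sigma|=4\pi k^{2}\mathfrak{c}_{\Sigma}^{2}$. These give $W=(2-k)^{2}/k^{2}$, not $(2-k)^{2}/(4k^{2})$; you dropped a factor of $4$. Hence $kW+4-k=4/k$, and on the model the right-hand side of (\ref{willmore}) is
\[
2-\frac{2}{k}+\frac{\big(|2-k|+2\big)^{2}}{k(4-k)^{2}}.
\]
For $0<k\le 2$ this equals $2-\frac{1}{k}$. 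For $2<k<4$ it equals $2-\frac{2}{k}+\frac{k}{(4-k)^{2}}$, which is strictly larger than $2-\frac{1}{k}$. The reason is precisely your caveat. On the model $H=2(2-k)|\nabla u|$ on $\Sigma$, a negative multiple when $k>2$, so $\int_{\Sigma}H|\nabla u|<0<\|H\|_{L^{2}}\|\nabla u\|_{L^{2}}$. Thus the Cauchy--Schwarz step is strict even though (\ref{0 cp}) and (\ref{lim B0}) are equalities there. For example, at $k=3$ you get $\frac{5}{3}$ on the left and $\frac{13}{3}$ on the right.

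Combined with the forward direction, this shows that for $2<k<4$ inequality (\ref{willmore}) is strict for every admissible $(M,g)$. So the ``if'' half of the stated characterization holds only for $0<k\le 2$. The paper's one-line appeal to Proposition~\ref{pro} takes this half for granted as well. No argument can complete that part of your Step 3; the correct conclusion is rigidity for $0<k\le 2$ and strict inequality for $2<k<4$.
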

\begin{proof}
	By (\ref{0 cp}) and H\"older's inequality, 
	$$
	k(4-k)\int_{\Sigma}|\nabla u|^{2}\leq 4\pi+k\Big(\int_{\Sigma}|\nabla u|^{2}\Big)^{\frac{1}{2}}\Big(\int_{\Sigma}H^{2}\Big)^{\frac{1}{2}}.
	$$
	Let $ z=\Big(\int_{\Sigma}|\nabla u|^{2}\Big)^{\frac{1}{2}} $. Then we get
	$$
	k(4-k)z^{2}-k\sqrt{16\pi W}z-4\pi\leq 0.
	$$
	Solving this equation, we obtain
\begin{equation}\label{z1}
	z^{2}\leq \frac{4\pi(\sqrt{kW}+\sqrt{kW+4-k})^{2}}{k(4-k)^{2}}.
\end{equation}
	On the other hand, by (\ref{lim B0}), we have 
\begin{equation}\label{z2}
	2-\frac{1}{k}\leq \mathfrak{m}\mathfrak{c}_{\Sigma}^{-1}+\frac{k}{4\pi}z^{2}.
\end{equation}
Now, (\ref{willmore}) follows from (\ref{z1}) and (\ref{z2}). The equality case follows from the equality case in Proposition~\ref{pro}.
\end{proof}

\begin{corollary}
Let $ (M,g) $ be a complete, orientable, one-ended asymptotically flat 3-manifold with compact, connnected boundary $ \Sigma $ and with $ R_{g}\geq 0 $. Suppose that $ H_{2}(M,\Sigma)=0 $. Let $ u $ be the solution of (\ref{u}) and let $ 0<k<4 $. If 
$$
\frac{1}{16\pi}\int_{\Sigma}H^{2} \leq \frac{(k-2)^{2}}{k^{2}},
$$
then we have 
$$
\mathfrak{m}\geq \frac{2(k-1)\mathfrak{c}_{\Sigma}}{k},
$$
and equality holds if and only if $ (M,g) $ is isometric to $ (M_{m,r_{0}},g_{m}) $ of mass $ \mathfrak{m}=2r_{0}(k-1) $ with $ r_{0}=\frac{\mathfrak{c}_{\Sigma}}{k} $.
\end{corollary}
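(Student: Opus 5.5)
The plan is to deduce the corollary directly from Proposition~\ref{cor1}. That inequality (\ref{willmore}) gives
$$
\mathfrak{m}\mathfrak{c}_{\Sigma}^{-1}\geq 2-\frac{1}{k}-\frac{(\sqrt{kW}+\sqrt{kW+4-k})^{2}}{(4-k)^{2}},
$$
and the right-hand side is nonincreasing in $W$. So it suffices to evaluate it at the extremal value $W=\frac{(k-2)^{2}}{k^{2}}$ and show that the subtracted term is at most $\frac{1}{k}$, since then
$$
\mathfrak{m}\mathfrak{c}_{\Sigma}^{-1}\geq 2-\frac{2}{k}=\frac{2(k-1)}{k}.
$$

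The key computation is as follows. At $W=\frac{(k-2)^{2}}{k^{2}}$ one has $kW=\frac{(k-2)^{2}}{k}$ and $kW+4-k=\frac{4}{k}$, so
$$
\frac{(\sqrt{kW}+\sqrt{kW+4-k})^{2}}{(4-k)^{2}}=\frac{(|k-2|+2)^{2}}{k(4-k)^{2}}.
$$
For $0<k\leq 2$ this equals $\frac{(4-k)^{2}}{k(4-k)^{2}}=\frac{1}{k}$ exactly, which gives the claimed mass bound. For rigidity: if $\mathfrak{m}=\frac{2(k-1)\mathfrak{c}_{\Sigma}}{k}$, then every inequality in the chain is an equality, in particular (\ref{willmore}). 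Its equality case, inherited from Proposition~\ref{pro} via the proof of Proposition~\ref{cor1}, gives the Schwarzschild rigidity. Conversely, the model $(M_{m,r_{0}},g_{m})$ with $\mathfrak{m}=2r_{0}(k-1)$ and $r_{0}=\mathfrak{c}_{\Sigma}/k$ satisfies equality by direct inspection.

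The main obstacle is the range $2<k<4$. There the same computation gives $\frac{k}{(4-k)^{2}}$, which exceeds $\frac{1}{k}$, so (\ref{willmore}) alone does not suffice. For this range I would first try to avoid the lossy Hölder step by combining (\ref{0 cp}) and (\ref{lim B0}) more carefully, e.g.\ via (\ref{lim A0}) together with $\int_{\Sigma}H|\nabla u|\leq \sqrt{16\pi W}\,(\int_{\Sigma}|\nabla u|^{2})^{1/2}$, or alternatively try (\ref{willmore}) with a different parameter $k'$ and optimize over $k'$. If neither closes the gap, I expect the statement as written should be restricted to $0<k\leq 2$, consistent with the range in Theorems~\ref{th 1.2} and \ref{th 1.3}.
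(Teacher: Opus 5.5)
Your argument for $0<k\leq 2$ is correct and matches the derivation the paper intends. The paper gives no proof; the corollary sits right after Proposition~\ref{cor1}, to be read off from (\ref{willmore}). At $W=\frac{(k-2)^{2}}{k^{2}}$ one has $kW+4-k=\frac{4}{k}$, so the correction term equals $\frac{1}{k}$. In the equality case, $W$ must equal $\frac{(k-2)^{2}}{k^{2}}$ and (\ref{willmore}) must be an equality. Hence (\ref{0 cp}) is an equality too, and Proposition~\ref{pro} gives the rigidity.

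For $2<k<4$, the gap you flag cannot be closed, because the statement is false in that range; the range $0<k<4$ in the corollary is an error in the paper. The hypothesis only involves $W$, and $W$ does not distinguish $k$ from $k^{\prime}=\frac{k}{k-1}\in(\frac{4}{3},2)$, since $1-\frac{2}{k^{\prime}}=-(1-\frac{2}{k})$.

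Take the Schwarzschild exterior $(M_{m,r_{0}},g_{m})$ with $\mathfrak{m}=2r_{0}(k^{\prime}-1)$.
\begin{itemize}
\item Capacity: since $(1+\frac{\mathfrak{m}}{2|x|})u$ must be Euclidean harmonic, $u=\frac{|x|-r_{0}}{|x|+\mathfrak{m}/2}$, so $\mathfrak{c}_{\Sigma}=r_{0}+\frac{\mathfrak{m}}{2}=k^{\prime}r_{0}$.
\item Willmore energy: the boundary sphere is mean convex ($2r_{0}>\mathfrak{m}$) and satisfies $\frac{1}{16\pi}\int_{\Sigma}H^{2}=\frac{(2r_{0}-\mathfrak{m})^{2}}{(2r_{0}+\mathfrak{m})^{2}}=\frac{(k^{\prime}-2)^{2}}{(k^{\prime})^{2}}=\frac{(k-2)^{2}}{k^{2}}$.
\item Remaining hypotheses: $R_{g}=0$ and $H_{2}(M,\Sigma)=0$.
\end{itemize}
Yet $\mathfrak{m}\mathfrak{c}_{\Sigma}^{-1}=\frac{2(k^{\prime}-1)}{k^{\prime}}=\frac{2}{k}<\frac{2(k-1)}{k}$. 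Concretely, for $k=3$: $\mathfrak{m}=r_{0}$, $W=\frac{1}{9}$ and $\mathfrak{c}_{\Sigma}=\frac{3}{2}r_{0}$, so $\mathfrak{m}=\frac{2}{3}\mathfrak{c}_{\Sigma}<\frac{4}{3}\mathfrak{c}_{\Sigma}$.

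The intended model with parameter $k>2$ has the same $W$ but $H<0$, because its boundary lies inside the horizon $|x|=\mathfrak{m}/2$. The hypothesis forgets this sign.

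Any argument using only the stated hypotheses applies to this example, so neither of your repair strategies can work. Applying (\ref{willmore}) with $k^{\prime}$ in place of $k$ gives exactly $\mathfrak{m}\geq\frac{2}{k}\mathfrak{c}_{\Sigma}$, and the example shows this is sharp. Replace your closing conjecture by this counterexample. The correct result is the corollary for $0<k\leq 2$, equivalently $\mathfrak{m}\geq(1-\sqrt{W})\mathfrak{c}_{\Sigma}$, and that is precisely what your argument proves.
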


\textbf{Proof of Theorem 1.3}. By the condition (\ref{condi H0}), we have 
\begin{equation}\label{int H }
\int_{\Sigma}|\nabla u|H\leq 4\pi \alpha \mathfrak{c}_{\Sigma}-k^{2}\alpha \mathfrak{c}_{\Sigma}\int_{\Sigma}|\nabla u|^{2}.
\end{equation}
It follows from (\ref{0 cp}) and (\ref{condi H0}) that 
$$
\begin{aligned}
k(4-k)\int_{\Sigma}|\nabla u|^{2}&\leq 4\pi+k\int_{\Sigma}|\nabla u|H\\
&\leq 4\pi+4\pi k\alpha \mathfrak{c}_{\Sigma}-k^{3}\alpha \mathfrak{c}_{\Sigma}\int_{\Sigma}|\nabla u|^{2}.
\end{aligned}
$$
Hence we get 
$$
k(4-k+k^{2}\alpha \mathfrak{c}_{\Sigma})\int_{\Sigma}|\nabla u|^{2}\leq 4\pi (1+k\alpha \mathfrak{c}_{\Sigma}).
$$
Notice that $ k\leq 4-k $ due to $ k\leq 2 $, we get  
$$
k^{2}(1+k\alpha \mathfrak{c}_{\Sigma})\int_{\Sigma}|\nabla u|^{2}\leq 4\pi (1+k\alpha \mathfrak{c}_{\Sigma}).
$$
Since $ \alpha>-\frac{1}{k\mathfrak{c}_{\Sigma}} $, we deduce that 
\begin{equation}\label{grad pi}
\int_{\Sigma}|\nabla u|^{2}\leq \frac{4\pi}{k^{2}}.
\end{equation}
Hence, it follows from (\ref{lim B0}) that
$$
0\leq 4\pi-k^{2}\int_{\Sigma}|\nabla u|^{2}\leq 4\pi k (\mathfrak{m}\mathfrak{c}_{\Sigma}^{-1}-\frac{2(k-1)}{k}),
$$ 
Thus $ \mathfrak{m}\geq \frac{2(k-1)\mathfrak{c}_{\Sigma}}{k} $. The rigidity case follows from that of Proposition~\ref{pro}. To prove the second inequality,  we apply H\"older's inequality and (\ref{grad pi}) to get
$$
4\pi \mathfrak{c}_{\Sigma}=\int_{\Sigma}|\nabla u|\leq \Big(\int_{\Sigma}|\nabla u|^{2}\Big)^{\frac{1}{2}}|\Sigma|^{\frac{1}{2}}\leq \frac{(4\pi)^{\frac{1}{2}}}{k}|\Sigma|^{\frac{1}{2}},
$$
which implies that 
$$
\sqrt{\frac{|\Sigma|}{4\pi k^{2}}}\geq \mathfrak{c}_{\Sigma}.
$$ 
Now the rigidity case again follows from that of Proposition~\ref{pro}.
$\hfill\square$

\section{\bf Integral Identities }
In this section, we prove Theorem~\ref{th 1.6}. We establish the mass identites via a harmonic
function $ u $ that equals $ 0 $ at the boundary and is asymptotic to $ 1 $ at $ \infty $. To proceed, we introduce the $ (0,2) $-symmetric tensors of $ M $.
$$
T=\nabla^{2}u+\frac{3(2\omega-k)}{(1-u)\omega}du\otimes du,\,\,\,B=T-\frac{tr(T)}{3}g=\nabla^{2}u-\Psi_{u},
$$
where
$$
\Psi_{u}=\frac{(2\omega-k)|\nabla u|^{2}}{(1-u)\omega}g-\frac{3(2\omega-k)}{(1-u)\omega}du\otimes du.
$$
\textbf{Proof of Theorem 1.6}.
Observe that 
$$
\begin{aligned}
&B(\nu,\nu)=-H|\nabla u|+\frac{2(2\omega-k)|\nabla u|^{2}}{(1-u)\omega},\\
&B(\nu,\cdot)|_{\Sigma_{t}}=\langle \nabla^{\Sigma_{t}}|\nabla u|, \cdot \rangle,\\
&B(\cdot,\cdot)|_{\Sigma_{t}}=|\nabla u|\Big(\mathbb{II}-\frac{(2\omega-k)|\nabla u|^{2}}{(1-u)\omega}\gamma_{t} \Big),
\end{aligned}
$$
where $ \gamma_{t} $ is the induced metric on $ \Sigma_{t} $. Hence, we have
\begin{equation}\label{Phi u}
|\nabla u|^{-2}|\nabla^{2}u-\Psi_{u}|^{2}=2|\nabla u|^{-2}|\nabla^{\Sigma_{t}}|\nabla u| |^{2}+|\overset{\circ}{\mathbb{II}}|^{2}+\frac{3}{2}\Big(H-\frac{2(2\omega-k)}{(1-u)\omega}|\nabla u|\Big)^{2}.
\end{equation}
Recall that 
$$
\mathcal{D}_{k}(t)=\frac{4\pi(1-t)}{k\omega(t)}+\frac{1}{\omega(t)^{2}}\int_{\Sigma_{t}}|\nabla u|H-\frac{4\omega(t)-k}{(1-t)\omega(t)^{3}}\int_{\Sigma_{t}}|\nabla u|^{2}.
$$
By (\ref{Psi geq0}) and (\ref{Phi u}), we get
$$
\mathcal{D}_{k}(t)\geq \int_{t}^{1}\frac{1}{\omega(s)^{2}}\int_{\Sigma_{s}}\frac{1}{2}\Big[\frac{|\nabla^{2}u-\Psi_{u}|^{2}}{|\nabla u|^{2}}+R_{g} \Big]ds=\int_{t}^{1}\psi_{k}(s)ds,
$$
where 
$$
\psi_{k}(t)=\frac{1}{\omega(t)^{2}}\int_{\Sigma_{t}}\frac{1}{2}\Big[\frac{|\nabla^{2}u-\Psi_{u}|^{2}}{|\nabla u|^{2}}+R_{g} \Big].
$$
Hence, given two regular values $ t_{1}<t_{2} $, taking $ f(u)=\frac{k^{2}}{(1-u)^{3}\omega(u)} $ in Lemma~\ref{f(u)1}, one has 
\begin{equation}\label{mono B}
\begin{aligned}
\mathcal{B}_{k}(t_{2})-\mathcal{B}_{k}(t_{1})&=\int_{t_{1}}^{t_{2}}\frac{k^{2}}{(1-t)^{2}}\Big[\frac{4\pi}{k}-\frac{4\omega(t)-k}{(1-t)^{2}\omega(t)^{2}}\int_{\Sigma_{t}}|\nabla u|^{2}+\frac{1}{(1-t)\omega(t)}\int_{\Sigma_{t}}|\nabla u|H \Big]dt\\
&=k^{2}\int_{t_{1}}^{t_{2}}\frac{\omega(t)}{(1-t)^{3}}\mathcal{D}_{k}(t)dt\\
&\geq k^{2}\int_{t_{1}}^{t_{2}}\frac{\omega(t)}{(1-t)^{3}}\Big(\int_{t}^{1}\psi_{k}(s)ds\Big)dt.
\end{aligned}
\end{equation}
Taking $ t_{2}\to 1 $ and $ t_{1}\to 0 $ in (\ref{mono B}), we obtain
$$
4\pi k\Big(\mathfrak{m}\mathfrak{c}_{\Sigma}^{-1}-\frac{2(k-1)}{k}\Big)-\mathcal{B}_{k}(0)\geq k^{2}\int_{0}^{1}\frac{\omega(t)}{(1-t)^{3}}\Big(\int_{t}^{1}\psi_{k}(s)ds\Big)dt.
$$ 
Integrating by parts gives
$$
\begin{aligned}
\int_{0}^{1}\frac{\omega(t)}{(1-t)^{3}}\Big(\int_{t}^{1}\psi_{k}(s)ds\Big)dt=&\Big[\frac{2\omega(t)-k}{2(1-t)^{2}}\int_{t}^{1}\psi_{k}(s)ds\Big]\Big|_{t=0}^{t=1}+\int_{0}^{1}\frac{2\omega(t)-k}{2(1-t)^{2}}\psi_{k}(t)dt\\
=&\lim\limits_{t\to 1}\frac{2\omega(t)-k}{2(1-t)^{2}}\int_{t}^{1}\psi_{k}(s)ds-\frac{2-k}{2}\int_{0}^{1}\psi_{k}(s)ds\\
&+\int_{0}^{1}\frac{2\omega(t)-k}{2(1-t)^{2}}\psi_{k}(t)dt.
\end{aligned}
$$
We claim that 
\begin{equation}\label{cla}
\lim\limits_{t\to 1}\frac{2\omega(t)-k}{2(1-t)^{2}}\int_{t}^{1}\psi_{k}(s)ds=0.
\end{equation}
Indeed, by Lemma~\ref{funda u}, we have the following
$$
\int_{t}^{1}\int_{\Sigma_{s}}|\nabla u|^{-2}|\nabla^{\Sigma_{s}}|\nabla u||^{2}+\frac{1}{2}|\overset{\circ}{\mathbb{II}}|^{2}=O((1-t)^{1+2\tau})
$$
and 
$$
\int_{t}^{1}\int_{\Sigma_{s}}|R_{g}|=o((1-t)^{2}).
$$
Moreover, we have
$$
\int_{t}^{1}\int_{\Sigma_{s}}\Big(H-\frac{2(2\omega-k)}{(1-u)\omega}|\nabla u|\Big)^{2}=O((1-t)^{1+2\tau}).
$$
Hence, (\ref{cla}) holds.

Now, we find that 
$$
\begin{aligned}
4\pi k\Big(\mathfrak{m}\mathfrak{c}_{\Sigma}^{-1}-\frac{2(k-1)}{k}\Big)-\mathcal{B}_{k}(0)&\geq \frac{k^{2}}{2}\int_{0}^{1}\Big[\frac{2\omega(t)-k}{(1-t)^{2}}+k-2\Big]\psi_{k}(t)dt\\
&=\frac{k^{2}}{4}\int_{0}^{1}\frac{1}{\omega(t)^{2}}\Big[\frac{2\omega(t)-k}{(1-t)^{2}}+k-2\Big]\int_{\Sigma_{t}}\Big[\frac{|\nabla^{2}u-\Phi_{u}|^{2}}{|\nabla u|^{2}}+R_{g} \Big]dt\\
&=\frac{k^{2}}{4}\int_{M}\frac{1}{\omega^{2}}\Big[\frac{2\omega-k}{(1-u)^{2}}+k-2\Big]\Big[\frac{|\nabla^{2}u-\Phi_{u}|^{2}}{|\nabla u|}+R_{g}|\nabla u|\Big]. 
\end{aligned}
$$
This proves (\ref{int 1}).

By Proposition~\ref{F mono}, $ \mathcal{F}_{k}(t) $ is is monotone nondecreasing in $ t $.  Similarly, we have 
$$
\mathcal{F}_{k}(t_{2})-\mathcal{F}_{k}(t_{1})\geq \int_{t_{1}}^{t_{2}}\Big(\frac{\omega(t)}{1-t}\Big)^{2}\psi_{k}(t)dt.
$$
Letting $ t_{2}\to 1 $ and $ t_{1}\to 0 $, we get 
$$
\begin{aligned}
8\pi(\mathfrak{m}\mathfrak{c}_{\Sigma}^{-1}-\frac{2(k-1)}{k})-\mathcal{F}_{k}(0)&\geq \int_{0}^{1}\Big(\frac{\omega(t)}{1-t}\Big)^{2}\psi_{k}(t)dt\\
&=\int_{0}^{1}\frac{1}{2(1-t)^{2}}\int_{\Sigma_{t}}\Big[\frac{|\nabla^{2}u-\Psi_{u}|^{2}}{|\nabla u|^{2}}+R_{g} \Big]\\
&=\int_{M}\frac{1}{2(1-u)^{2}}\Big[\frac{|\nabla^{2}u-\Psi_{u}|^{2}}{|\nabla u|}+R_{g}|\nabla u|\Big].
\end{aligned}
$$
This proves (\ref{int 2}).
$\hfill\square$

\end{document}